\newtheorem{theorem}{Theorem}[section]
\newtheorem{lemma}[theorem]{Lemma}
\theoremstyle{definition}
\newtheorem{definition}[theorem]{Definition}
\newcommand{\norm}[1]{\left\Vert#1\right\Vert}
\numberwithin{equation}{section}
\begin{document}
\font\nho=cmr10
\def\dive{\operatorname{div}}
\def\cal{\mathcal}
\def\L{\cal L}

\def \ud{\underline }
\def\id{{\indent }}
\def\f{\frac}
\def\non{{\noindent}}
 \def\le{\leqslant} 
 \def\leq{\leqslant} 
\def\rar{\rightarrow}
\def\Rar{\Rightarrow}
\def\ti{\times}
\def\i{\mathbb I}
\def\j{\mathbb J}
\def\si{\sigma}
\def\Ga{\Gamma}
\def\ga{\gamma}
\def\ld{{\lambda}}
\def\Si{\Psi}
\def\f{\mathbf F}
\def\r{\hro{R}}
\def\e{\cal{E}}
\def\B{\cal B}
\def\A{\mathcal{A}}
\def\p{\mathbb P}

\def\tet{\theta}
\def\Tet{\Theta}
\def\hro{\mathbb}
\def\ho{\mathcal}
\def\P{\ho P}
\def\E{\mathcal{E}}
\def\n{\mathbb{N}}
\def\M{\mathbb{M}}
\def\dMu{\mathbf{U}}
\def\dMcs{\mathbf{C}}
\def\dMcu{\mathbf{C^u}}
\def\vk{\vskip 0.2cm}
\def\td{\Leftrightarrow}
\def\df{\frac}
\def\Wei{\mathrm{We}}
\def\Rey{\mathrm{Re}}
\def\s{\mathbb S}
\def\l{\mathcal{L}}
\def\C+{C_+([t_0,\infty))}
\def\o{\cal O}

%\def\@frontmatterwidth{16cm}
%\textheight=22cm
%\oddsidemargin=0cm
%\evensidemargin=0cm
%\begin{frontmatter}
\title[Stability of the stationary Boussinesq system]{Stability of solutions of stationary Boussinesq systems on weak-Morrey spaces}
\author[T.T. Ngoc]{Tran Thi Ngoc}
\address{Tran Thi Ngoc \hfill\break
Faculty of Fundamental Sciences, East Asia University of Technology, Polyco building, Trinh Van Bo, Nam Tu Liem, Hanoi, Viet
Nam}
\email{ngoctt@eaut.edu.vn or ngoc2tt@gmail.com}
\author[P.T. Xuan]{Pham Truong Xuan}
\address{Pham Truong Xuan \hfill\break
Thang Long Institute of Mathematics and Applied Sciences (TIMAS), Thang Long University \hfill\break
Nghiem Xuan Yem, Hoang Mai, Hanoi, Vietnam} 
\email{xuanpt@thanglong.edu.vn or phamtruongxuan.k5@gmail.com}
%\author[T.V. Thuy]{Tran Van Thuy}
%\address{Tran Van Thuy \hfill\break Faculty of Fundamental Sciences, East Asia University of Technology, Polyco building, Trinh Van Bo, Nam Tu Liem, Hanoi, Viet Nam}
%\email{thuytv@eaut.edu.vn or thuyhum@gmail.com}

\begin{abstract}
In this paper we establish the asymptotic stability of steady solutions for the Boussinesq systems in the framework of Cartesian product of critical weak-Morrey spaces on $\mathbb{R}^n$, where $n \geqslant 3$. In our strategy, we first establish the continuity for the long time of the bilinear terms associated with the mild solutions of the Boussinesq systems, i.e., the bilinear estimates by using only the norm of the present spaces. As a direct consequence, we obtain the existence of global small mild solutions and asymptotic stability of steady solutions of the Boussinesq systems in the class of continuous functions from $[0, \infty)$ to the Cartesian product of critical weak-Morrey spaces. Our techniques consist interpolation of operators, duality, heat semigroup estimates , H\"older and Young inequalities in block spaces (based on Lorentz spaces) that are preduals of Morrey–Lorentz spaces. 
\end{abstract}

\subjclass[2020]{35A01, 35B65, 35Q30, 35Q35, 76D03, 76D07}

\keywords{Boussinesq system, bilinear estimates, Lorentz spaces, Morrey-Lorentz spaces, well-posedness, stability}

\maketitle

\tableofcontents

\section{Introduction}
In this paper, we are concerned about the stability of the steady problem for the viscous Boussinesq system which has the following form
\begin{equation}\label{StBouEq} 
\left\{
  \begin{array}{rll}
 - \Delta \bar{u} + (\bar{u}\cdot\nabla)\bar{u} + \nabla \bar{p} \!\! &= \kappa \bar{\theta} g \quad  &x\in  \mathbb{R}^n, \hfill \\
\nabla \cdot \bar{u} \!\!&=\; 0 \quad &x\in \mathbb{R}^n, \\
 - \Delta \bar{\theta} + (\bar{u} \cdot \nabla)\bar{\theta} \!\!&=\; h \quad &x\in \mathbb{R}^n,\\
(\bar{u},\bar{\theta}) \!\!& \to \; (0,0) \quad & \hbox{as, } |x| \to \infty,\\
\end{array}\right.
\end{equation}
where the unknowns: $\bar{u}(x)$ denotes the velocity of the fluid, $\bar{p}(x)$ is the hydrostatic pressure and $\bar{\theta}$ is the temperature. The given $g(x)$ represents the external force by unit of mass and $h(x)$ is the reference temperature and the positive constant $\kappa$ denotes the coefficient of volume expansion.

In particular, we will determine a new class of steady solutions $(\bar{u},\bar{\theta})$ of \eqref{StBouEq} which is stable for nonsmooth initial disturbance.
Inspirited from previous works \cite{Hi1995,Fe2010,Ko1995}, we describe the stability problem for \eqref{StBouEq} as follows: if the pair $(\bar{u}(x),\bar{\theta}(x))$ is initially perturbed by $(u_0(x),\theta_0(x))$, the the perturbed flow $(\widetilde{u},\widetilde{p},\widetilde{\theta})$ is described by
\begin{equation}\label{BouEq} 
\left\{
  \begin{array}{rll}
 \widetilde{u}_t - \Delta \widetilde{u} + (\widetilde{u} \cdot\nabla) \widetilde{u} + \nabla \widetilde{p} \!\! &= \kappa \widetilde{\theta} g \quad  &x\in  \mathbb{R}^n,\, t>0, \hfill \\
\nabla \cdot \widetilde{u} \!\!&=\; 0 \quad &x\in \mathbb{R}^n,\,  t\geqslant 0, \\
\widetilde{\theta}_t - \Delta \widetilde{\theta} + (\widetilde{u} \cdot \nabla)\widetilde{\theta} \!\!&=\; h \quad &x\in \mathbb{R}^n,\, t>0, \\
\widetilde{u}(x,0) \!\!& = \;\bar{u}_0(x)+u_0(x) \quad & x \in \mathbb{R}^n,\\
\widetilde{\theta}(x,0) \!\!& = \;\bar{\theta}_0(x)+\theta_0(x) \quad & x\in \mathbb{R}^n,\\
(\widetilde{u},\widetilde{\theta}) \!\!& \to \; (0,0) \quad & \hbox{as,  } |x| \to \infty, t>0,\\
\end{array}\right.
\end{equation}
Let $(\widetilde{u},\widetilde{p},\widetilde{\theta})$ be a solution of the problem \eqref{BouEq} and $(u,p,\theta)$ be the disturbance defined by 
$$u(x,t):= \widetilde{u}(x,t)-\bar{u}(x), \, \theta(x,t):=\widetilde{\theta}(x,t)-\bar{\theta}(x),\, p(x,t):=\widetilde{p}(x,t)-\bar{p}(x).$$ 
We have that the triple of functions $(u,p,\theta)$ satisfies the following disturbance system 
\begin{equation}\label{DisturBouEq} 
\left\{
  \begin{array}{rll}
 u_t - \Delta u + (u\cdot\nabla)u +(\bar{u}\cdot\nabla)u + (u\cdot\nabla)\bar{u} + \nabla p \!\! &= \kappa \theta g \quad  &x\in  \mathbb{R}^n,\, t>0, \hfill \\
\nabla \cdot u \!\!&=\; 0 \quad &x\in \mathbb{R}^n,\,  t\geqslant 0, \\
\theta_t - \Delta\theta + (u\cdot \nabla)\theta + (\bar{u}\cdot\nabla)\theta + (u\cdot\nabla)\bar{\theta}   \!\!&=\; 0 \quad &x\in \mathbb{R}^n,\, t>0, \\
u(x,0) \!\!& = \; u_0(x) \quad & x \in \mathbb{R}^n,\\
\theta(x,0) \!\!& = \; \theta_0(x) \quad & x\in \mathbb{R}^n,\\
(u,\theta) \!\!& \to \; (0,0) \quad & \hbox{as,  } |x| \to \infty, t>0,\\
\end{array}\right.
\end{equation}
Therefore, we can study the stability problem for the steady Boussinesq system \eqref{StBouEq} by establishing the existence and large time behavior of global mild solutions to the system \eqref{DisturBouEq}. Note that, if the function $\theta$ vanishes, then the system \eqref{DisturBouEq} becomes the Navier-Stokes equation and our stability resutls can be implied to the ones of the Navier-Stokes equations (for the stability of stationary Navier-Stokes equations see \cite{Ko1995}). Moreover, in the case where the gravitational field satisfies the homogeneous property $g(t,x) = \lambda^2g(\lambda^2 t,\lambda x)$ for $\lambda>0$, the system \eqref{DisturBouEq} presents the following scaling
\begin{equation}\label{invariant}
[u(,x),\theta(t,x)]\to [u_\lambda(t,x),\theta_\lambda(t,x)] = \lambda[u(\lambda^2t,\lambda x),\theta(\lambda^2t,\lambda x)].
\end{equation}

We recall briefly some results about the existence, uniqueness and long-time behavious of the Boussinesq system. The issues of existence and long-time behavior of strong solutions for the Boussinesq system \eqref{BouEq} have attracted the attention of many authors (see for instance \cite{Al2011,Br2019,Ca1980,Fe2006,Fe2008,Fe2010,Ka2008} and references therein). In particular, the existence of solutions in the class $L^p(0,T;L^q(\mathbb{R}^n))$ with suitable $p, q$ and the unbounded function $g$ was established early by Cannon et al. in \cite{Ca1980}. The existence of \eqref{BouEq} in pseudomeasure-type spaces by considering a non-constant and constant gravitational field $g$, respectively was studied by Ferreira et al. \cite{Fe2008} and Karch et al. \cite{Ka2008}. The existence and stability of the global small mild solutions of \eqref{BouEq} in weak-Lorentz spaces were studied by Ferreira et al. in \cite{Fe2006}. These resutls were also extended in Morrey spaces in \cite{Al2011}. More further, the uniqueness of the mild solutions $(u,\theta)$ for \eqref{BouEq} in the subspace with time-weight functions of $C([0,T], L^3(\mathbb{R}^3)\times L^1(\mathbb{R}^3))$ with the initial data $(u_0,\theta_0)\in L^3\times L^1$ on $\mathbb{R}^3$ has been obtained recently by Brandolese et al. in \cite{Br2019}. Recently, the unconditional uniqueness of Boussinesq system has been established by Ferreira and Xuan \cite{Fe2023}.
Besides, the global well-posedness of weak solutions for \eqref{BouEq} with the initial data $(u_0,\theta_0)\in B^{-1}_{\infty,1}\cap L^2\times B^0_{2,1}$ or $(u_0,\theta_0) \in L^2\times B^{-1}_{\infty,1}\cap L^2$ on $\mathbb{R}^2$ were studied in \cite{Ab2007,Da2009} respectively. The existence and time-decays of weak solutions for \eqref{BouEq} with the initial data $(u_0,\theta_0)\in L^2_\sigma\times L^2$ on $\mathbb{R}^3$ were studied by Schonbek et al. in \cite{Br2012}.

Concerning the stability of the stationary solution of the steady problem for the Boussinesq system \eqref{StBouEq}, Hishida established the existence and exponential stability of a global in time strong solution of \eqref{BouEq} near to the steady state in a bounded domain of $\mathbb{R}^3$ in \cite{Hi1995}. After that, Hishida proved also the existence and large time behaviour of a global in time strong solution in an exterior domain of $\mathbb{R}^3$ in \cite{Hi1997} by using $L^{p,\infty}-L^{q,\infty}$-estimates of the semigroup $e^{-tL}$ associated with the corresponding linearized equations of \eqref{BouEq}. Recently, Ferreira et al. \cite{Fe2010} have provided a complete answer to the stability problem of stationary solution of \eqref{StBouEq} in weak-Lorentz spaces $L^{n,\infty}$, where $n \geqslant 3$. They considered the steady solution $(\bar{u},\bar{\theta}) \in L^{n,\infty}_\sigma(\Omega)\times L^{n,\infty}(\Omega)$ (where $\Omega\subset \mathbb{R}^n$) and established the long time behaviour of mild solution of \eqref{DisturBouEq} in $BC((0,\infty);L^{q,\infty}_\sigma(\Omega)\times L^{q,\infty}(\Omega))$, where $q>n$. These results improve the earlier ones of Hishida \cite{Hi1995,Hi1997}. In a related work, Kozono et al. \cite{Ko1995} answered also such problem for the stationary solution of Navier-Stokes equation in the framework of Morrey spaces. On the other hand, the stability of global small mild solution of the Boussinesq systems without the appearance of stationary solutions in the framework of Morrey and weak-Morrey spaces have been also obtained recently in \cite{Al2011} and \cite{VXT2024}, respectively.

In the present paper we consider the stability of the stationary solution $(\bar{u},\bar{\theta})$ of \eqref{StBouEq}, i.e., the existence and large time behaviour of self-similar solutions (whic are invariant by \eqref{invariant}) of the system \eqref{DisturBouEq} in the class of Cartesian product of weak-Morrey spaces $\mathcal{M}^\sigma_{p,\infty,\lambda}\times \mathcal{M}_{p,\infty,\lambda}$ on $\mathbb{R}^n$, where $\mathcal{M}_{p,\infty,\lambda}$ is weak-Morrey space (see Definition \ref{weak-Morrey} below) with $\lambda=n-p$ and $n \geqslant 3$. In particular, in Section \ref{S2} we first recall the notions of Lorentz, Morrey-Lorentz spaces and the construction of predual of Morrey-Lorentz spaces (also called block spaces). Then we recall some estimates for the heat semigroup $\left\{e^{t\Delta}\right\}_{t>0}$ on Morrey-Lorentz and block spaces, where Yamazaki-type estimate is the key to prove the main resutls. In Section \ref{S3} we will prove the bilinear estimates in $H_{p,\infty}$ and $H_{q,r,\infty}$ (see Subsections \ref{Hp} and \ref{Hqr} below) of the bilinear term associated with the mild solution of \eqref{DisturBouEq} and then using these estimates to establish global well-posedness in $BC((0,\infty); \mathcal{M}^\sigma_{p,\infty,\lambda}\times \mathcal{M}_{p,\infty,\lambda})$ and asymptotic stablity in $BC((0,\infty); \mathcal{M}^\sigma_{q,\infty,\lambda}\times \mathcal{M}_{r,\infty,\lambda})$, where $n \geqslant 3$ and $q,r$ satisfy some suitable conditions. Our main resutls are Theorems \ref{Bestimate}, \ref{wellposed}, \ref{BBBestimate} and \ref{Stability}. Our results are generalized the previous ones of the steady Boussinesq systems in weak-$L^p$ spaces obtained by Hishida \cite{Hi1997} and Ferreira et al. \cite{Fe2010} and of the Navier-Stokes equations in Morrey spaces obtained by Kozono et al. \cite{Ko1995}.
%{\bf Acknowledgements.} The authors would like to thank Prof. Lucas C.F. Ferreira (University of Campias, Brazil) for some helpful comments.

\section{Function spaces and properties}\label{S2}
In this section, we recall some definitions and properties about Lorentz, Morrey, Morrey–Lorentz spaces, the preduals of the Morrey-Lorentz spaces and their properties that will be useful for the next sections (in detail see \cite{Fe2016,Fe2023}).
\subsection{Lorentz, Morrey and Morrey-Lorentz spaces}
\begin{definition}
Let $\Omega$ be a subset of $\mathbb{R}^n$, a measurable function $f$ defined on $\Omega$
belongs to the Lorentz space $L^{p,q}(\Omega)$ if the quantity
\begin{subnumcases}{\norm{f}_{L^{p,q}} =}
\left( \int_0^\infty\left[ t^{\frac{1}{p}}f^{**}(t)\right]^q\frac{dt}{t} \right)^{\frac{1}{q}} & $\hbox{if  } 1<p<\infty,\, 1\leq q <\infty,$ \cr
\sup_{t>0} t^{\frac{1}{p}}f^{**}(t) & $\hbox{if  } 1<p\leq \infty,\, q=\infty$\nonumber  
\end{subnumcases}
is finite, where
$$f^{**}(t) = \frac{1}{t}\int_0^tf^{*}(s)ds$$
and
$$f^*(t)=\inf \left\{s>0: m(\left\{ x\in \Omega:|f(x)|>s\right\})\leq t \right\},\, t> 0,$$
with $m$ denoting the Lebesgue measure in $\mathbb{R}^n$.
\end{definition}

The space $L^{p,q}(\Omega)$ with the norm $\norm{f}_{p,q}$ is a Banach space. In particular, $L^p(\Omega)=L^{p,p}(\Omega)$ and $L^{p,\infty}$ is called weak-$L^p$ space or the Marcinkiewicz space.
For $1\leq q_1\leq p \leq q_2\leq \infty$ we have the following relation
\begin{equation}\label{Inclusion}
L^{p,1}(\Omega)\subset L^{p,q_1}(\Omega)\subset L^p(\Omega) \subset L^{p,q_2}(\Omega)\subset L^{p,\infty}(\Omega).
\end{equation}

Let $1<p_1,p_2,p_3\leq \infty$ and $1\leq r_1,r_2,r_3\leq \infty$ be such that $\dfrac{1}{p_3}=\dfrac{1}{p_1}+ \dfrac{1}{p_2}$ and $\dfrac{1}{r_1} + \dfrac{1}{r_2}\geqslant \dfrac{1}{r_3}$. We have the H\"older inequality (see \cite{Hu,One}):
\begin{equation}\label{Holder}
\norm{fg}_{L^{p_3,r_3}} \leq C\norm{f}_{L^{p_1,r_1}}\norm{g}_{L^{p_2,r_2}},
\end{equation}
where $C > 0$ is a constant independent of $f$ and $g$. We also have Young inequality in Lorentz spaces
defined on $\Omega=\mathbb{R}^n$. In particular, if $1<p_1,p_2,p_3\leq \infty$ and $1\leq r_1,r_2,r_3\leq \infty$ be such that $\dfrac{1}{p_3}=\dfrac{1}{p_1}+ \dfrac{1}{p_2}-1$ and $\dfrac{1}{r_1} + \dfrac{1}{r_2}\geqslant \dfrac{1}{r_3}$, then (see \cite{One}):
\begin{equation}
\norm{f*g}_{L^{p_3,r_3}} \leq C\norm{f}_{L^{p_1,r_1}}\norm{g}_{L^{p_2,r_2}}.
\end{equation}
Moreover, in the case $p_1=1$ and $1<p=p_2=p_3\leq \infty$, the following inequality holds (see \cite{BeBr}):
\begin{equation}\label{Young}
\norm{f*g}_{L^{p,\infty}} \leq \frac{p^{1+\frac{1}{p}}}{p-1}\norm{f}_{L^1}\norm{g}_{L^{p,\infty}}.
\end{equation}

We now recall an interpolation property of Lorentz spaces. For $0<p_1<p_2\leq \infty$, $0<\theta<1$, $\dfrac{1}{p} = \dfrac{1-\theta}{p_1}+ \dfrac{\theta}{p_2}$ and $1\leq r_1,r_2,r\leq\infty$, we have (see \cite[Theorems 5.3.1 and 5.3.2]{BeLo}):
\begin{equation}
\left( L^{p_1,r_1},L^{p_2,r_2}\right)_{\theta,r} = L^{p,r},
\end{equation}
where $L^{p_1,r_1}$ is endowed with $\norm{.}^*_{L^{p_1,r_1}}$ instead of $\norm{.}_{L^{p_1,r_1}}$ when $0<p_1\leq r_1$.

By interpolating \eqref{Young}, we get
\begin{equation}\label{InYoung}
\norm{f*g}_{L^{p,r}} \leq C \norm{f}_{L^1}\norm{g}_{L^{p,r}},
\end{equation}
for $1<p\leq \infty$ and $1\leq r \leq \infty$. In the case $1\leq p=r\leq \infty$, the inequality \eqref{InYoung} holds if the space $L^1$ is replaced by
the space of signed finite measures $\mathcal{M}$ endowed with the norm of the total variation 
$\norm{\mu}_{\mathcal{M}} = |\mu|(\mathbb{R}^n)$.

Now we recall the definitions of Morrey and Morrey-Lorentz spaces. 
\begin{definition}
Let $1\leq p,q\leq \infty$ and $0\leq \lambda <n$ be fixed. Moreover, we assume also that $q=\infty$ when $p=\infty$. The Morrey space $\mathcal{M}_{p,\lambda}:=\mathcal{M}_{p,\lambda}(\Omega)$ is defined as the set of all functions $f\in L^p_{loc}(\Omega)$ such that the norm
$$\norm{f}_{p,\lambda} = \sup_{x_0\in \Omega,\rho>0}\rho^{-\frac{\lambda}{p}}\norm{f}_{L^p(D(x_0,\rho)\cap \Omega)}<\infty,$$
where $D(x_0,\rho)=\left\{ x\in \mathbb{R}^n:|x-x_0|<\rho \right\}$. In the case $p=1$, we consider $\mathcal{M}_{1,\lambda}$ as a space of signed measures
with $\norm{.}_{L^1(D(x_0,\rho)\cap \Omega)}$ standing for the total variation of $f$ on $D(x_0,\rho)\cap \Omega$.
\end{definition}
A natural generalization of Morrey space $\mathcal{M}_{p,\lambda}$ is Morrey-Lorentz space.
\begin{definition}\label{weak-Morrey}
We say that $f\in L^{p,q}_{Loc}(\Omega)$ belongs to the Morrey–Lorentz space $\mathcal{M}_{p,q,\lambda}:=\mathcal{M}_{p,q,\lambda}(\Omega)$ when the quantity
\begin{equation}\label{NormLM}
\norm{f}_{p,q,\lambda}:=\norm{f}_{\mathcal{M}_{p,q,\lambda}} = \sup_{x_0\in \Omega, \rho>0} \rho^{-\frac{\lambda}{p}}\norm{f}_{L^{p,q}(D(x_0,\rho)\cap \Omega)}<\infty.
\end{equation}
In the case $p=q$, we have $\mathcal{M}_{p,p,\lambda}=\mathcal{M}_{p,\lambda}$. In the case $q=\infty$, we obtain the weak-Morrey space $\mathcal{M}_{p,\infty,\lambda}$.
\end{definition}
If $p=q=1$ or $1<p\leq \infty$ and $1\leq q \leq \infty$, the space $\mathcal{M}_{p,q,\lambda}$ endowed with the norm $\norm{.}_{p,q,\lambda}$ is a Banach space. If $p=1$ and $1<q\leq \infty$, we consider \eqref{NormLM} with $\norm{f}^*_{L^{p,q}(D(x_0,\rho)\cap \Omega)}$ instead if $\norm{f}_{L^{p,q}(D(x_0,\rho)\cap \Omega)}$ and $\mathcal{M}_{p,q,\lambda}$ is a complete quasi-normed space.

We have the continuous inclusions
$$L^n(\mathbb{R}^n)\hookrightarrow L^{n,\infty}(\mathbb{R}^n)\hookrightarrow \mathcal{M}_{p,n-p}(\mathbb{R}^n)\hookrightarrow \mathcal{M}_{p,\infty,n-p}(\mathbb{R}^n).$$
Moreover, in view of \eqref{Inclusion} and \eqref{Holder}, we have the following inclusion
\begin{equation}
\mathcal{M}_{p_2,q_2,\lambda} \subset \mathcal{M}_{p_1,q_1,\mu},
\end{equation}
where $1\leq p_1\leq p_2\leq \infty$, $\dfrac{n-\mu}{p_1} = \dfrac{n-\lambda}{p_2}$ and $1\leq q_2\leq q_1\leq \infty$.

Setting $\tau_{p,\lambda}:=\dfrac{n-\lambda}{p}$. By a simple change of variables, one obtains the scaling property
\begin{equation}
\norm{f(cx)}_{\mathcal{M}_{p,q,\lambda}(\Omega)} = c^{-\tau_{p,\lambda}}\norm{f(x)}_{\mathcal{M}_{p,q,\lambda}(c\Omega)}, \hbox{  for all  } c>0.
\end{equation}

We have the H\"older inequality on the framework of Morrey-Lorentz spaces as follows (see \cite[Lemma 2.1]{Fe2016}):
\begin{lemma}
For $1<p_0,p_1,r\leq \infty$ and $0\leq \beta, \lambda_0,\lambda_1<n$ satisfy that $\dfrac{1}{r}=\dfrac{1}{p_0}+\dfrac{1}{p_1}$, $\dfrac{\beta}{r} = \dfrac{\lambda_0}{p_0}+\dfrac{\lambda_1}{p_1}$ and $s\geqslant 1$ satisfies that $\dfrac{1}{q_0} + \dfrac{1}{q_1}\geqslant \dfrac{1}{s}$, the following inequality holds
\begin{equation}\label{HolderWM}
\norm{fg}_{r,s,\beta} \leq C\norm{f}_{p_0,q_0,\lambda_0}\norm{g}_{p_1,q_1,\lambda_1},
\end{equation}
where $C>0$ is a universal constant. In the case $r=s=1$ and $\dfrac{1}{q_0}+\dfrac{1}{q_1}=1$, then \eqref{HolderWM} also holds by considering $\mathcal{M}_{1,1,\beta}$ as $\mathcal{M}_{1,\beta}$ (similarly when either $p_0=1$ or $p_1=1$). 
\end{lemma}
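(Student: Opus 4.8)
The plan is to reduce \eqref{HolderWM} to the Hölder inequality on Lorentz spaces \eqref{Holder} by a standard localization on balls. First I would fix an arbitrary center $x_0\in\Omega$ and radius $\rho>0$ and write $D:=D(x_0,\rho)\cap\Omega$ for brevity. Since the hypotheses give $\frac{1}{r}=\frac{1}{p_0}+\frac{1}{p_1}$ and $\frac{1}{q_0}+\frac{1}{q_1}\geq\frac{1}{s}$, the Lorentz Hölder inequality \eqref{Holder} applies on $D$ and yields
\begin{equation*}
\norm{fg}_{L^{r,s}(D)}\leq C\,\norm{f}_{L^{p_0,q_0}(D)}\,\norm{g}_{L^{p_1,q_1}(D)},
\end{equation*}
with a constant $C$ depending only on the exponents.

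Next I would insert the Morrey weight. Multiplying the last inequality by $\rho^{-\beta/r}$ and using the scaling relation $\frac{\beta}{r}=\frac{\lambda_0}{p_0}+\frac{\lambda_1}{p_1}$ to factor $\rho^{-\beta/r}=\rho^{-\lambda_0/p_0}\,\rho^{-\lambda_1/p_1}$, I would distribute the two powers onto the two factors on the right:
\begin{equation*}
\rho^{-\beta/r}\norm{fg}_{L^{r,s}(D)}\leq C\left(\rho^{-\lambda_0/p_0}\norm{f}_{L^{p_0,q_0}(D)}\right)\left(\rho^{-\lambda_1/p_1}\norm{g}_{L^{p_1,q_1}(D)}\right).
\end{equation*}
The essential point is that the same ball $D$ appears in both factors, so by the definition \eqref{NormLM} each factor is bounded by the corresponding Morrey--Lorentz norm, namely $\rho^{-\lambda_0/p_0}\norm{f}_{L^{p_0,q_0}(D)}\leq\norm{f}_{p_0,q_0,\lambda_0}$ and $\rho^{-\lambda_1/p_1}\norm{g}_{L^{p_1,q_1}(D)}\leq\norm{g}_{p_1,q_1,\lambda_1}$. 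Since the resulting right-hand side no longer depends on $x_0$ or $\rho$, taking the supremum over all $x_0\in\Omega$ and $\rho>0$ on the left-hand side produces \eqref{HolderWM}.

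For the endpoint assertions I would run exactly the same scheme with the appropriate product estimate on $D$ in place of \eqref{Holder}: when $r=s=1$ and $\frac{1}{q_0}+\frac{1}{q_1}=1$ one uses the O'Neil Hölder bound $\norm{fg}_{L^1(D)}\leq C\norm{f}_{L^{p_0,q_0}(D)}\norm{g}_{L^{p_1,q_1}(D)}$ (see \cite{One}), and when $p_0=1$ — so that $f$ is a signed measure and the exponent relations force $p_1=\infty$, $\beta=\lambda_0$ — one uses $\norm{fg}_{L^1(D)}\leq\norm{g}_{L^\infty(D)}\norm{f}_{L^1(D)}$ with $\norm{\cdot}_{L^1(D)}$ read as total variation (the case $p_1=1$ being symmetric); distributing $\rho^{-\beta/r}$ as above and taking the supremum over balls closes the argument. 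Every step here is an elementary manipulation, so I do not anticipate a genuine obstacle; the only points that require care are the bookkeeping of the exponent relations — in particular checking that the split $\rho^{-\beta/r}=\rho^{-\lambda_0/p_0}\rho^{-\lambda_1/p_1}$ is precisely the hypothesis $\frac{\beta}{r}=\frac{\lambda_0}{p_0}+\frac{\lambda_1}{p_1}$ — and the measure-valued conventions in the $p=1$ endpoint cases.
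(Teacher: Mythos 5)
Your argument is correct: localizing to a fixed ball $D(x_0,\rho)\cap\Omega$, applying the Lorentz--H\"older inequality \eqref{Holder} there, splitting $\rho^{-\beta/r}=\rho^{-\lambda_0/p_0}\rho^{-\lambda_1/p_1}$ via the hypothesis $\frac{\beta}{r}=\frac{\lambda_0}{p_0}+\frac{\lambda_1}{p_1}$, and then taking the supremum over centers and radii is precisely the standard proof, and your treatment of the endpoint and measure-valued cases is also sound. The paper itself offers no proof here --- it simply cites \cite[Lemma 2.1]{Fe2016} --- and your proposal reproduces the expected argument behind that citation, so there is nothing to correct.
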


In order to prove the estimate of bilinear term associated with the mild solution of the Boussinesq system i.e.,''bilinear estimate'' we need the following dispersive and smoothing estimates of the heat semigroup $\left\{ e^{t\Delta} \right\}_{t\geqslant 0}$ on Morrey-Lorentz spaces (see \cite[Lemma 2.2]{Fe2016}):
\begin{lemma}\label{Disp}
Let $\Omega=\mathbb{R}^n$, $m\in \left\{ 0\right\} \cup \mathbb{N}$, $1<p,r\leq \infty$, $1\leq q\leq d \leq \infty$, $0\leq \lambda,\mu <\infty$ and $\tau_{r,\mu}= \dfrac{n-\mu}{r} \leq \tau_{p,\lambda}= \dfrac{n-\lambda}{p}$. Assume further that $\lambda=\mu$ when $p\leq r$. Then, there exists a constant $C>0$ satisfying that
\begin{equation}\label{disp}
\norm{\nabla_x^me^{t\Delta}\varphi}_{r,d,\mu} \leq Ct^{-\frac{m}{2}-\frac{1}{2}(\tau_{p,\lambda}-\tau_{r,\mu})}\norm{\varphi}_{p,q,\lambda}
\end{equation}
for all $\varphi \in \mathcal{M}_{p,q,\lambda}$. The estimate \eqref{disp} is also valid when either $r=d=1$ or $1\leq p=q\leq \infty$ or $1<r<\infty$ and $p=q=1$.
\end{lemma}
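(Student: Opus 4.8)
The plan is to reduce \eqref{disp} to a convolution estimate for the Gauss--Weierstrass kernel and then to localise that estimate on balls; the delicate point is to extract the sharp power of $t$ in the small-time regime. Write $\nabla_x^m e^{t\Delta}\varphi = K_t*\varphi$ with $K_t := \nabla^m G_t$, $G_t(x) = (4\pi t)^{-n/2}e^{-|x|^2/4t}$, so that $K_t(x) = t^{-(n+m)/2}K_1(x/\sqrt{t})$ with $K_1 = \nabla^m G_1$ in the Schwartz class; hence $|K_t(x)| \le C_N\,t^{-(n+m)/2}(1+|x|/\sqrt{t})^{-N}$ for every $N$, and $\norm{K_t}_{L^{a,s}} = C_{a,s}\,t^{-(n+m)/2+n/2a}$ for all $1\le a,s\le\infty$. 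Applying \eqref{disp} to $\varphi(c\,\cdot)$, using $e^{t\Delta}[\varphi(c\,\cdot)] = (e^{c^2 t\Delta}\varphi)(c\,\cdot)$ together with the dilation law $\norm{f(c\,\cdot)}_{\mathcal{M}_{p,q,\lambda}} = c^{-\tau_{p,\lambda}}\norm{f}_{\mathcal{M}_{p,q,\lambda}}$ from Section \ref{S2}, shows that $-\tfrac{m}{2}-\tfrac12(\tau_{p,\lambda}-\tau_{r,\mu})$ is the only exponent compatible with scaling, and the same computation lets us normalise $\rho = 1$ in the ball $D(x_0,\rho)$ entering the Morrey--Lorentz norm; thus it suffices to prove $\norm{K_t*\varphi}_{L^{r,d}(D(x_0,1))} \le C\,t^{-m/2-\frac12(\tau_{p,\lambda}-\tau_{r,\mu})}\norm{\varphi}_{p,q,\lambda}$ uniformly in $x_0 \in \mathbb{R}^n$ and $t>0$.

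Fix $x_0$ and split $\varphi = \varphi\,\mathbf{1}_{D(x_0,2R)} + \varphi\,\mathbf{1}_{D(x_0,2R)^c}$ with $R := \max(1,\sqrt{t})$. For the outer part one has $|x-y|\ge R\ge\sqrt{t}$ whenever $x\in D(x_0,1)$ and $y$ lies in its support, so the Schwartz bound on $K_t$, H\"older in Lorentz spaces \eqref{Holder}, and the defining inequality $\norm{\varphi}_{L^{p,q}(D(x_0,2^{j}R))}\le(2^{j}R)^{\lambda/p}\norm{\varphi}_{p,q,\lambda}$ on dyadic annuli give, after summing the (geometric) series in $j$ with $N$ large, a contribution below the target in both regimes. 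For the inner part there are two regimes. If $t\le1$ (so $R=1$), tile $D(x_0,2)$ by balls $B_i$ of radius $\sqrt{t}$ with bounded overlap; when $p\le r$, Young's inequality in Lorentz spaces \eqref{InYoung}, \eqref{Young} applied tile by tile yields $\norm{K_t*(\varphi\,\mathbf{1}_{B_i})}_{L^{r,d}}\lesssim\norm{K_t}_{L^{a,1}}\norm{\varphi}_{L^{p,q}(B_i)}$ with $\tfrac1a = 1+\tfrac1r-\tfrac1p$, and summing over $i$ one couples the $\ell^p$-bound $\sum_i\norm{\varphi}_{L^{p,q}(B_i)}^p\lesssim\norm{\varphi}_{L^{p,q}(D(x_0,2))}^p\lesssim\norm{\varphi}_{p,q,\lambda}^p$ with the $\ell^\infty$-bound $\sup_i\norm{\varphi}_{L^{p,q}(B_i)}\lesssim t^{\lambda/(2p)}\norm{\varphi}_{p,q,\lambda}$ through the embedding $\ell^p\cap\ell^\infty\hookrightarrow\ell^r$; the resulting factor $t^{\lambda(1/p-1/r)/2}$ upgrades the crude Young exponent $-\tfrac m2-\tfrac n2(\tfrac1p-\tfrac1r)$ into exactly $-\tfrac m2-\tfrac12(\tau_{p,\lambda}-\tau_{r,\mu})$ (using $\lambda=\mu$). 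For $p>r$ one argues the same way with H\"older replacing Young tile by tile; here $\tau_{p,\lambda}-\tau_{r,\mu}$ need not be sharp and one even gets the simpler bound $t^{-m/2}\norm{\varphi}_{p,q,\lambda}$, which is $\le$ the target since $t\le1$. If $t\ge1$ (so $R=\sqrt{t}$), the output ball $D(x_0,1)$ is small compared with the kernel scale, so $K_t*(\varphi\,\mathbf{1}_{D(x_0,2\sqrt{t})})$ varies slowly on it; bounding it by its value at $x_0$ plus its oscillation, i.e.\ by $\big(\norm{K_t}_{L^{p',q'}}+\norm{\nabla K_t}_{L^{p',q'}}\big)(\sqrt{t})^{\lambda/p}\norm{\varphi}_{p,q,\lambda}$, produces a quantity $\le$ the target in this regime. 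Throughout, the hypothesis $q\le d$ is what lets the secondary Lorentz index pass through the local steps, and $\tau_{r,\mu}\le\tau_{p,\lambda}$ (with $\lambda=\mu$ when $p\le r$) keeps every residual power of $t$ non-positive.

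The step needing the most care---and the one I expect to be the main obstacle---is the small-time inner estimate: a single global application of Young's inequality on $\mathbb{R}^n$ only gives the weaker exponent $-\tfrac m2-\tfrac n2(\tfrac1p-\tfrac1r)$, so recovering the sharp one genuinely requires the heat-scale tiling together with the $\ell^p\cap\ell^\infty\hookrightarrow\ell^r$ interpolation, and all the Lorentz-index bookkeeping must be carried out with the endpoint-sensitive forms of Young's and H\"older's inequalities recorded in \eqref{Holder}--\eqref{InYoung}. A conceptually cleaner (but, at the end, more delicate) alternative is to prove $\norm{K_t*\varphi}_{\mathcal{M}_{p,q,\lambda}}\lesssim t^{-m/2}\norm{\varphi}_{\mathcal{M}_{p,q,\lambda}}$ by Minkowski's integral inequality and $\norm{K_t*\varphi}_{L^\infty}\lesssim t^{-m/2-\tau_{p,\lambda}/2}\norm{\varphi}_{\mathcal{M}_{p,q,\lambda}}$ by a dyadic decomposition, and then to interpolate between these two endpoints---the caveat being that interpolation of Morrey-type spaces is subtle, which is why I would favour the direct route. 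Finally, the endpoint cases $r=d=1$, $1\le p=q\le\infty$ and $1<r<\infty$ with $p=q=1$ require only replacing the Lorentz Young/H\"older inequalities by their versions with $L^1$ or the total-variation space $\mathcal{M}$, and otherwise proceed verbatim.
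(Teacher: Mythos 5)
First, a remark on context: the paper does not prove Lemma \ref{Disp} at all --- it is quoted verbatim from \cite[Lemma 2.2]{Fe2016} --- so your attempt has to stand on its own. Most of your skeleton is fine (the Gaussian kernel bounds, the scaling reduction to $\rho=1$, the near/far splitting with dyadic annuli for the far part, the large-time inner estimate by local H\"older), but the step you yourself flag as the crux --- the small-time inner estimate --- contains a genuine gap. Your mechanism rests on the $\ell^p$-aggregation claim $\sum_i\norm{\varphi}_{L^{p,q}(B_i)}^p\lesssim\norm{\varphi}_{L^{p,q}(D(x_0,2))}^p$ over the heat-scale tiling, and this is false precisely in the case the paper needs, $q=\infty$ (more generally $q>p$). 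Take $\varphi(x)=|x|^{-n/p}$ on the unit ball: its $L^{p,\infty}$ norm is finite, but a tile $B_i$ of radius $\sqrt t$ at distance $k\sqrt t$ from the origin has $\norm{\varphi}_{L^{p,\infty}(B_i)}^p\gtrsim k^{-n}$, there are $\sim k^{n-1}$ such tiles, and summing over $k\le t^{-1/2}$ gives $\gtrsim\log(1/t)$, not $O(1)$. (A multi-height bump construction shows the same logarithmic loss persists even if you put $\norm{\varphi}_{p,\infty,\lambda}^p$ on the right-hand side, so this is not an artifact of using the local Lorentz norm.) Because you normalized $\rho=1$ by scaling, the $\rho=1$ estimate must hold with the exact critical power for \emph{all} small $t$; a $(\log(1/t))^{1/r}$ loss cannot be absorbed, and the exponent in \eqref{disp} is used critically downstream (Lemmas \ref{LinearEst} and \ref{YamazakiType}, hence the bilinear estimates), so the tiling plus $\ell^p\cap\ell^\infty\hookrightarrow\ell^r$ route, as written, does not prove the lemma.

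The repair is essentially the ``alternative'' you mention at the end, and it does not require the Morrey-scale interpolation you were (rightly) worried about. Prove the two endpoints directly: (i) $\norm{\nabla_x^m e^{t\Delta}\varphi}_{L^\infty}\lesssim t^{-\frac m2-\frac12\tau_{p,\lambda}}\norm{\varphi}_{p,q,\lambda}$ by splitting the convolution integral over dyadic annuli centered at the output point and using the Morrey bound on each annulus (no tiling needed); and (ii) $\norm{\nabla_x^m e^{t\Delta}\varphi}_{p,q,\lambda}\lesssim t^{-\frac m2}\norm{\varphi}_{p,q,\lambda}$ by local Young plus Gaussian tails. Then, instead of abstract interpolation, use the elementary ball-wise H\"older/interpolation inequality $\norm{f}_{L^{r,d}(D(x_0,\rho))}\lesssim\norm{f}_{L^\infty}^{1-\frac pr}\norm{f}_{L^{p,q}(D(x_0,\rho))}^{\frac pr}$ applied to $f=\nabla_x^m e^{t\Delta}\varphi$ on every ball: with $\lambda=\mu$ this combines (i) and (ii) into exactly \eqref{disp} for $p\le r$, and the case $p>r$ (where $\mu$ may differ) then follows from the Morrey--Lorentz inclusion with equal scaling indices. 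This is the classical Kato-type Morrey argument and is, in substance, the route taken in the cited source; your direct tiling argument would need a genuinely new idea to recover the sharp power in the weak case.
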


\subsection{Block spaces and properties}
We recall the predual of Morrey-Lorentz spaces $\mathcal{M}_{p,q,\lambda}$ and their properties.
We begin with the definition of block spaces (see \cite{Bl,Fe2016,Zo}).
Let $1 < p \leq \infty$, $1 \leq q \leq \infty$ and $\kappa \geqslant 0$. We assume also that $q = \infty$ when $p = \infty$.
Recall that $D(a, \rho)$ stands for the open ball in $\mathbb{R}^n$ with center a and radius $\rho>0$. A measurable function $b(x)$ is a $(p,q,\kappa)$-block if $\mathrm{supp}(b) \subset D(a, \rho)$ for some $a \in \mathbb{R}^n$ and $\rho > 0$, and satisfies
\begin{equation}
\rho^{\frac{\kappa}{p}}\norm{b}_{L^{p,q}(D(a,\rho))} \leqslant 1.
\end{equation}
The block space $\mathcal{P}\mathcal{D}_{p,q,\kappa}$ is the set of all measurable functions $h(x)$ that can be written as
\begin{equation}
h(x) = \sum_{k=1}^\infty\alpha_kb_k(x), \hbox{  for a.e.,  } x\in \mathbb{R}^n,
\end{equation}
where $b_k(x)$ is a $(p,q,\kappa)$-block and $\sum_{k=1}^\infty\alpha_k<\infty$. The space $\mathcal{P}\mathcal{D}_{p,q,\kappa}$ is a Banach space endowed with the norm
\begin{equation}
\norm{h}_{\mathcal{P}\mathcal{D}_{p,q,\kappa}} = \inf \left\{ \sum_{k=1}^\infty|\alpha_k|<\infty; \, h= \sum_{k=1}^\infty\alpha_kb_k \hbox{  where $b_k$'s are $(p,q,\kappa)$-blocks}  \right\}.
\end{equation}
For $1<p_2\leq p_1\leq \infty$ and $\omega_{p_2,\kappa_2} = \dfrac{n+\kappa_2}{p_2} = \dfrac{n+\kappa_1}{p_1} = \omega_{p_1,\kappa_1}$ we have the following inclusion
\begin{equation}
\mathcal{P}\mathcal{D}_{p_1,q_1,\kappa_1}\subset \mathcal{P}\mathcal{D}_{p_2,q_2,\kappa_2}.
\end{equation}

The relation between the predual of Morrey-Lorentz space and block space is given in the following lemma (see \cite[Lemma 3.1]{Fe2016}):
\begin{lemma}\label{DualBlock}
Let $1<p\leq \infty$, $1<q\leq \infty$, $0\leq \lambda<n$, $\kappa>0$, $\dfrac{\lambda}{p}=\dfrac{\kappa}{p'}$, $\dfrac{1}{p}+\dfrac{1}{p'}=1$ and $\dfrac{1}{q}+\dfrac{1}{q'}=1$. We assume further that $q=\infty$ when $p=\infty$. Then, we have
\begin{equation}
\left( \mathcal{P}\mathcal{D}_{p',q',\kappa} \right)^* = \mathcal{M}_{p,q,\lambda}.
\end{equation}
\end{lemma}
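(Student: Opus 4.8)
The plan is to realize the duality through the natural bilinear pairing $\langle f,h\rangle=\int_{\mathbb{R}^n}f(x)h(x)\,dx$ and to show that the map $f\mapsto\langle f,\cdot\rangle$ furnishes the identification $\left(\mathcal{P}\mathcal{D}_{p',q',\kappa}\right)^*=\mathcal{M}_{p,q,\lambda}$. Concretely I would establish the two bounds $\norm{\langle f,\cdot\rangle}_{(\mathcal{P}\mathcal{D}_{p',q',\kappa})^*}\le C\norm{f}_{p,q,\lambda}$ for every $f\in\mathcal{M}_{p,q,\lambda}$, and the existence, for an arbitrary $\Lambda\in\left(\mathcal{P}\mathcal{D}_{p',q',\kappa}\right)^*$, of a representing function $f\in\mathcal{M}_{p,q,\lambda}$ with $\norm{f}_{p,q,\lambda}\le C\norm{\Lambda}$; together with injectivity of the map this yields the claim.

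\textbf{Boundedness of the pairing.} Let $f\in\mathcal{M}_{p,q,\lambda}$ and let $b$ be a $(p',q',\kappa)$-block with $\mathrm{supp}(b)\subset D(a,\rho)$. Since $\frac1p+\frac1{p'}=1$ and $\frac1q+\frac1{q'}=1$, Hölder's inequality in Lorentz spaces (in the form $\norm{fb}_{L^1(D(a,\rho))}\le C\norm{f}_{L^{p,q}(D(a,\rho))}\norm{b}_{L^{p',q'}(D(a,\rho))}$), the definition of $\norm{\cdot}_{p,q,\lambda}$, the block bound $\norm{b}_{L^{p',q'}(D(a,\rho))}\le\rho^{-\kappa/p'}$, and the relation $\frac{\lambda}{p}=\frac{\kappa}{p'}$ give
\[
\abs{\langle f,b\rangle}\le C\norm{f}_{L^{p,q}(D(a,\rho))}\norm{b}_{L^{p',q'}(D(a,\rho))}\le C\,\rho^{\lambda/p}\norm{f}_{p,q,\lambda}\,\rho^{-\kappa/p'}=C\norm{f}_{p,q,\lambda}.
\]
For $h=\sum_k\alpha_kb_k$ this yields $\abs{\langle f,h\rangle}\le C\sum_k\abs{\alpha_k}\,\norm{f}_{p,q,\lambda}$, and taking the infimum over all block decompositions of $h$ shows that $\langle f,\cdot\rangle$ extends to a bounded functional on $\mathcal{P}\mathcal{D}_{p',q',\kappa}$ with norm $\le C\norm{f}_{p,q,\lambda}$. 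Injectivity of $f\mapsto\langle f,\cdot\rangle$ is clear since indicator functions of small balls belong to $\mathcal{P}\mathcal{D}_{p',q',\kappa}$.

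\textbf{Surjectivity.} Fix $\Lambda\in\left(\mathcal{P}\mathcal{D}_{p',q',\kappa}\right)^*$, $a\in\mathbb{R}^n$ and $\rho>0$. For $g\in L^{p',q'}(D(a,\rho))$, extended by zero off the ball, the function $g/\bigl(\rho^{\kappa/p'}\norm{g}_{L^{p',q'}(D(a,\rho))}\bigr)$ is a $(p',q',\kappa)$-block, so $\norm{g}_{\mathcal{P}\mathcal{D}_{p',q',\kappa}}\le\rho^{\kappa/p'}\norm{g}_{L^{p',q'}(D(a,\rho))}$; hence $\Lambda$ restricts to a bounded functional on $L^{p',q'}(D(a,\rho))$ of norm at most $\rho^{\kappa/p'}\norm{\Lambda}$. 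Under the present hypotheses $1<p\le\infty$ (so $1\le p'<\infty$, with $p'=1$ only if $p=\infty$, in which case also $q=q'=\infty$) and $1<q\le\infty$ (so $1\le q'<\infty$), the local Lorentz duality $\bigl(L^{p',q'}(D(a,\rho))\bigr)^*=L^{p,q}(D(a,\rho))$ applies and produces a unique $f_{a,\rho}\in L^{p,q}(D(a,\rho))$ with $\Lambda(g)=\int f_{a,\rho}\,g$ for all such $g$ and $\norm{f_{a,\rho}}_{L^{p,q}(D(a,\rho))}\le\rho^{\kappa/p'}\norm{\Lambda}$. By uniqueness the $f_{a,\rho}$ coincide on overlapping balls (compare them on any ball containing both), so they patch to a single measurable function $f$ on $\mathbb{R}^n$ with $f|_{D(a,\rho)}=f_{a,\rho}$; then $\rho^{-\lambda/p}\norm{f}_{L^{p,q}(D(a,\rho))}\le\rho^{-\lambda/p}\rho^{\kappa/p'}\norm{\Lambda}=\norm{\Lambda}$, whence $f\in\mathcal{M}_{p,q,\lambda}$ with $\norm{f}_{p,q,\lambda}\le\norm{\Lambda}$. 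Finally $\Lambda(b)=\langle f,b\rangle$ for every block $b$ (it is supported in some $D(a,\rho)$), hence for every finite sum of blocks; since such finite sums are dense in $\mathcal{P}\mathcal{D}_{p',q',\kappa}$ — for $h=\sum_k\alpha_kb_k$ the tail $\sum_{k>N}\alpha_kb_k$ has $\mathcal{P}\mathcal{D}$-norm $\le\sum_{k>N}\abs{\alpha_k}\to0$ — and both $\Lambda$ and $\langle f,\cdot\rangle$ are continuous, we conclude $\Lambda=\langle f,\cdot\rangle$, which closes the argument.

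The step I expect to be the main obstacle is this surjectivity argument. It requires the local duality $\bigl(L^{p',q'}(D)\bigr)^*=L^{p,q}(D)$ in exactly the stated range of exponents, including the endpoints $p=\infty$ (that is $p'=1$) and $q=\infty$ (that is $q'=1$), and a careful verification that the locally defined representatives $f_{a,\rho}$ are mutually consistent, so that the glued function $f$ is well defined and measurable and its Morrey--Lorentz norm is genuinely controlled by $\norm{\Lambda}$. The remaining ingredients — O'Neil's Hölder inequality, the one-block bound for functions supported in a ball, and the density of finite block sums — are routine.
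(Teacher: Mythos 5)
Your proposal is correct, and it is the standard predual argument: Hölder's inequality in Lorentz spaces gives the embedding $\mathcal{M}_{p,q,\lambda}\hookrightarrow(\mathcal{P}\mathcal{D}_{p',q',\kappa})^*$, and local Lorentz duality $(L^{p',q'}(D))^*=L^{p,q}(D)$ (valid here since $q'<\infty$, or $p'=q'=1$) plus the gluing of the local representatives gives surjectivity. Note that the paper itself contains no proof of this lemma — it is quoted from Ferreira's work (Lemma 3.1 of the cited reference \cite{Fe2016}), where the argument runs along essentially the same lines as yours. Two cosmetic points: the identity $\langle f,h\rangle=\sum_k\alpha_k\langle f,b_k\rangle$ for an infinite block decomposition should be justified by absolute convergence (Tonelli), which your one-block estimate already provides, and the Lorentz duality is an isomorphism with equivalence of norms, so the bound in the surjectivity step is $\norm{f}_{p,q,\lambda}\leq C\norm{\Lambda}$ rather than with constant $1$; also the endpoint $p=\infty$ you treat is vacuous here, since $\kappa>0$ together with $\lambda/p=\kappa/p'$ excludes it.
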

In order to prove the bilinear estimate in the next section we need to use the linear and continuous of the interpolation of operators in block spaces (see \cite[Lemma 3.2]{Fe2016}).
\begin{lemma}\label{Interpolation}
Let $1<p,p_1,p_0\leq \infty$, $1<q,q_1,q_0\leq \infty$ and $\kappa,\kappa_1,\kappa_0\geqslant 0$ be such that $\dfrac{1}{p} = \dfrac{1-\theta}{p_0}+\dfrac{\theta}{p_1}$ and $\dfrac{\kappa}{p} = \dfrac{(1-\theta)\kappa_0}{p_0} + \dfrac{\theta \kappa_1}{p_1}$ with $\theta\in (0,\,1)$. If $X_0$ and $X_1$ are Banach spaces and $\mathcal{T}$ is a continuous linear operator satisfying that
\begin{equation}
\mathcal{T}: \mathcal{P}\mathcal{D}_{p_0,q_0,\kappa_0}\to X_0 \hbox{  and  } \mathcal{T}: \mathcal{P}\mathcal{D}_{p_1,q_1,\kappa_1}\to X_1,
\end{equation}
Then, $\mathcal{T}: \mathcal{P}\mathcal{D}_{p,q,\kappa}\to (X_0,X_1)_{\theta,q}$ is linear and continuous.
\end{lemma}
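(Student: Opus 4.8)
The plan is to deduce the lemma from the exactness of the real interpolation functor, once one has the continuous inclusion
\[
\mathcal{P}\mathcal{D}_{p,q,\kappa}\hookrightarrow \mathbf{Y}:=\big(\mathcal{P}\mathcal{D}_{p_0,q_0,\kappa_0},\,\mathcal{P}\mathcal{D}_{p_1,q_1,\kappa_1}\big)_{\theta,q}.
\]
Granting this, the interpolation theorem for linear operators (see \cite{BeLo}), applied to $\mathcal{T}$ on the couples $(\mathcal{P}\mathcal{D}_{p_0,q_0,\kappa_0},\mathcal{P}\mathcal{D}_{p_1,q_1,\kappa_1})$ and $(X_0,X_1)$, shows that $\mathcal{T}\colon \mathbf{Y}\to (X_0,X_1)_{\theta,q}$ is bounded, and precomposing with the inclusion finishes the proof. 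So the whole argument reduces to that inclusion.

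I would prove the inclusion by reduction to a single block. It suffices to show $\|b\|_{\mathbf{Y}}\le C$ for every $(p,q,\kappa)$-block $b$, with $C$ independent of $b$: indeed, for $h=\sum_k\alpha_k b_k\in\mathcal{P}\mathcal{D}_{p,q,\kappa}$ the series then converges in the complete normed space $\mathbf{Y}$, the triangle inequality gives $\|h\|_{\mathbf{Y}}\le C\sum_k|\alpha_k|$, and taking the infimum over block representations yields $\mathcal{P}\mathcal{D}_{p,q,\kappa}\hookrightarrow\mathbf{Y}$ with norm $\le C$.

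The core step is thus the per-block estimate. Fix a $(p,q,\kappa)$-block $b$ with $\mathrm{supp}(b)\subset D(a,\rho)$ and $\rho^{\kappa/p}\|b\|_{L^{p,q}(D(a,\rho))}\le 1$, and assume $p_0<p_1$, so that $p_0<p<p_1$ (the degenerate cases, and the cases in which $p,q,q_0$ or $q_1$ equals $\infty$, are handled by the same bookkeeping). I would use two elementary observations: (a) if $f$ is supported in $D(a,\rho)$, then $f/(\rho^{\kappa_i/p_i}\|f\|_{L^{p_i,q_i}(D(a,\rho))})$ is a $(p_i,q_i,\kappa_i)$-block of radius $\rho$, whence $\|f\|_{\mathcal{P}\mathcal{D}_{p_i,q_i,\kappa_i}}\le\rho^{\kappa_i/p_i}\|f\|_{L^{p_i,q_i}(D(a,\rho))}$; and (b) on the finite-measure ball $D(a,\rho)$ one has $L^{p,q}(D(a,\rho))\hookrightarrow L^{p_0,q_0}(D(a,\rho))$ since $p_0<p$, with no constraint on the secondary indices. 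For an arbitrary level function $\mu(\cdot)$, split $b=b\,\mathbf{1}_{\{|b|>\mu(t)\}}+b\,\mathbf{1}_{\{|b|\le\mu(t)\}}$: the high part lies in $L^{p_0,q_0}(D(a,\rho))$ by (b), and the bounded low part lies in $L^{p_1,q_1}(D(a,\rho))$. Applying (a) to each piece and setting $\delta:=\tfrac{\kappa_1}{p_1}-\tfrac{\kappa_0}{p_0}$, we get for every $t>0$
\[
K\big(t,b;\mathcal{P}\mathcal{D}_{p_0,q_0,\kappa_0},\mathcal{P}\mathcal{D}_{p_1,q_1,\kappa_1}\big)\le \rho^{\kappa_0/p_0}\Big(\big\|b\,\mathbf{1}_{\{|b|>\mu(t)\}}\big\|_{L^{p_0,q_0}}+t\rho^{\delta}\big\|b\,\mathbf{1}_{\{|b|\le\mu(t)\}}\big\|_{L^{p_1,q_1}}\Big).
\]
Choosing $\mu(t)$ exactly as in the classical proof that $L^{p,q}=(L^{p_0,q_0},L^{p_1,q_1})_{\theta,q}$ — this is where the relation $\tfrac1p=\tfrac{1-\theta}{p_0}+\tfrac{\theta}{p_1}$ is used — then raising to the power $q$, integrating $t^{-\theta q}(\cdot)\tfrac{dt}{t}$, and substituting $s=t\rho^{\delta}$, the powers of $\rho$ amalgamate into $\rho^{\kappa_0/p_0+\theta\delta}=\rho^{(1-\theta)\kappa_0/p_0+\theta\kappa_1/p_1}=\rho^{\kappa/p}$ by the hypothesis on $\kappa/p$, while the remaining integral is bounded by $C\|b\|_{L^{p,q}}^q$ by that same classical computation. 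This gives $\|b\|_{\mathbf{Y}}\le C\rho^{\kappa/p}\|b\|_{L^{p,q}(D(a,\rho))}\le C$; equivalently, one may first dilate the block to the unit ball, which is consistent because $\omega_{p,\kappa}=(1-\theta)\omega_{p_0,\kappa_0}+\theta\omega_{p_1,\kappa_1}$ — the same linear relation in disguise.

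I expect the per-block estimate to be the main obstacle, for two reasons. First, the splitting of $b$ has to place each summand into the correct Lorentz space over $D(a,\rho)$ simultaneously, which is precisely why both the block normalization (a) and the finite-measure embedding (b) are invoked; this is routine but must be checked with care at endpoint indices. Second, and more delicately, one must keep exact track of the powers of $\rho$ coming from the block normalizations and from the time-dilation $t\mapsto t\rho^{\delta}$ of the $K$-functional: it is the hypothesis $\tfrac{\kappa}{p}=\tfrac{(1-\theta)\kappa_0}{p_0}+\tfrac{\theta\kappa_1}{p_1}$ (equivalently, additivity of the homogeneity exponents $\omega$) that forces them all to merge into the single factor $\rho^{\kappa/p}$, which the normalization $\rho^{\kappa/p}\|b\|_{L^{p,q}}\le 1$ then kills. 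Once the per-block bound is in hand, summing over blocks and invoking the exactness of the real interpolation functor completes the proof without further difficulty.
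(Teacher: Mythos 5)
The paper does not prove this lemma at all: it is recalled verbatim from \cite[Lemma 3.2]{Fe2016}, so there is no in-paper argument for you to match. Your reconstruction is essentially sound and, in fact, follows the natural route of the cited source: you reduce everything to the continuous inclusion $\mathcal{P}\mathcal{D}_{p,q,\kappa}\hookrightarrow\left(\mathcal{P}\mathcal{D}_{p_0,q_0,\kappa_0},\mathcal{P}\mathcal{D}_{p_1,q_1,\kappa_1}\right)_{\theta,q}$, prove it block by block through a $K$-functional estimate (using the block normalization to pass from Lorentz norms on $D(a,\rho)$ to block-space norms, the dilation $s=t\rho^{\delta}$, and the classical embedding $L^{p,q}\subset(L^{p_0,q_0},L^{p_1,q_1})_{\theta,q}$), and then conclude by exact interpolation of operators; the bookkeeping of the powers of $\rho$ via $\kappa/p=(1-\theta)\kappa_0/p_0+\theta\kappa_1/p_1$ is correct, and the summation over blocks in the complete space $\left(\cdot,\cdot\right)_{\theta,q}$ is fine. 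Interestingly, your logical order is the reverse of the paper's presentation: the paper states the lemma first and records the inclusion \eqref{IncluBlock} as its corollary, whereas you establish the inclusion directly and deduce the lemma from it — a perfectly legitimate (and arguably more transparent) arrangement. One caveat: your dismissal of the ``degenerate cases'' is too quick when $p_0=p_1$, since then $(L^{p_0,q_0},L^{p_1,q_1})_{\theta,q}=L^{p,q}$ only under the extra relation $1/q=(1-\theta)/q_0+\theta/q_1$, so your per-block estimate does not go through for arbitrary $q,q_0,q_1$; you should either exclude $p_0=p_1$ (which is all the paper ever uses, e.g.\ in Lemma \ref{YamazakiType}) or add that secondary-index condition in that case. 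With that proviso, the argument is complete.
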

As a direct consequence of the above lemma we have the continuous inclusion
\begin{equation}\label{IncluBlock}
\mathcal{P}\mathcal{D}_{p,q,\kappa} \hookrightarrow \left( \mathcal{P}\mathcal{D}_{p_0,q_0,\kappa_0}, \mathcal{P}\mathcal{D}_{p_1,q_1,\kappa_1} \right)_{\theta,q},
\end{equation}
where $\dfrac{1}{p} = \dfrac{1-\theta}{p_0}+\dfrac{\theta}{p_1}$ and $\dfrac{\kappa}{p} = \dfrac{(1-\theta)\kappa_0}{p_0}+\dfrac{\theta\kappa}{p_1}$. The reverse inclusion of \eqref{IncluBlock} does not hold true due to the family of Morrey spaces (more generally, the
one of Morrey–Lorentz spaces) is not closed under the functor $(.,.)_{\theta,q}$.

We have the H\"older-type inequality in block space (see \cite[Lemma 3.5]{Fe2016}):
\begin{lemma}\label{HolderBlockSpace}
Let $1<p,p_1,p_0\leq \infty$, $1<q,q_1,q_0\leq \infty$ and $\kappa,\kappa_1,\kappa_0\geqslant 0$ be such that $\dfrac{1}{p} = \dfrac{1}{p_0}+\dfrac{1}{p_1}$ and $\dfrac{1}{q} \leq \dfrac{1}{1_0} + \dfrac{1}{q_1}$. Then, we have
\begin{equation}\label{HolderBlock}
\norm{fg}_{\mathcal{P}\mathcal{D}_{p,q,\kappa}}\leq C \norm{f}_{\mathcal{P}\mathcal{D}_{p_0,q_0,\kappa_0}}\norm{g}_{\mathcal{P}\mathcal{D}_{p_1,q_1,\kappa_1}},
\end{equation}
where $C > 0$ is a constant independent of $f$ and $g$. Moreover, if $p=q=1$ and $\dfrac{1}{q_0}+\dfrac{1}{q_1}\geqslant 1$, then \eqref{HolderBlock} is also valid. 
\end{lemma}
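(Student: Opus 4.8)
The natural strategy is to reduce the inequality to the Lorentz Hölder inequality \eqref{Holder} via the atomic (block) decomposition. Write $f = \sum_{j\ge 1}\alpha_j a_j$ and $g = \sum_{k\ge 1}\beta_k b_k$, where $a_j$ is a $(p_0,q_0,\kappa_0)$-block supported in a ball $D(x_j,\rho_j)$, $b_k$ is a $(p_1,q_1,\kappa_1)$-block supported in $D(y_k,\sigma_k)$, and $\sum_j|\alpha_j| \le 2\norm{f}_{\mathcal{PD}_{p_0,q_0,\kappa_0}}$, $\sum_k|\beta_k| \le 2\norm{g}_{\mathcal{PD}_{p_1,q_1,\kappa_1}}$. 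Then, at least formally, $fg = \sum_{j,k}\alpha_j\beta_k\, a_jb_k$, so it suffices to prove that each product $a_jb_k$ is, up to one universal constant $C$ (the constant in \eqref{Holder}, depending only on the exponents), a $(p,q,\kappa)$-block. Granting this, absolute summability of $\sum_{j,k}|\alpha_j\beta_k| = \bigl(\sum_j|\alpha_j|\bigr)\bigl(\sum_k|\beta_k|\bigr)$ makes the partial sums Cauchy in $\mathcal{PD}_{p,q,\kappa}$, and passing to the infimum gives $\norm{fg}_{\mathcal{PD}_{p,q,\kappa}} \le C\bigl(\sum_j|\alpha_j|\bigr)\bigl(\sum_k|\beta_k|\bigr) \le 4C\norm{f}_{\mathcal{PD}_{p_0,q_0,\kappa_0}}\norm{g}_{\mathcal{PD}_{p_1,q_1,\kappa_1}}$, which is \eqref{HolderBlock}.

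To see that $C^{-1}a_jb_k$ is a $(p,q,\kappa)$-block, fix $j,k$ and, by symmetry, assume $\rho_j\le\sigma_k$. Since $\mathrm{supp}(a_jb_k)\subset D(x_j,\rho_j)\cap D(y_k,\sigma_k)\subset D(x_j,\rho_j)$, the product is supported in a ball of radius $\rho_j$, and on that ball \eqref{Holder} applies because $\tfrac1p=\tfrac1{p_0}+\tfrac1{p_1}$ and $\tfrac1q\le\tfrac1{q_0}+\tfrac1{q_1}$:
\[
\norm{a_jb_k}_{L^{p,q}(D(x_j,\rho_j))} \le C\,\norm{a_j}_{L^{p_0,q_0}(D(x_j,\rho_j))}\,\norm{b_k}_{L^{p_1,q_1}(D(x_j,\rho_j))} \le C\,\rho_j^{-\kappa_0/p_0}\,\sigma_k^{-\kappa_1/p_1},
\]
where in the last step we used the two block normalizations together with the monotonicity of the Lorentz quasinorm under restriction to subsets (for $b_k$ one first observes $\norm{b_k}_{L^{p_1,q_1}(D(x_j,\rho_j))}=\norm{b_k}_{L^{p_1,q_1}(D(x_j,\rho_j)\cap D(y_k,\sigma_k))}\le\norm{b_k}_{L^{p_1,q_1}(D(y_k,\sigma_k))}$, since $b_k$ vanishes off $D(y_k,\sigma_k)$). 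Therefore
\[
\rho_j^{\kappa/p}\norm{a_jb_k}_{L^{p,q}(D(x_j,\rho_j))} \le C\,\rho_j^{\kappa/p-\kappa_0/p_0}\,\sigma_k^{-\kappa_1/p_1} = C\,(\rho_j/\sigma_k)^{\kappa_1/p_1}\le C,
\]
using $\kappa_1\ge 0$, $\rho_j\le\sigma_k$, and the homogeneity/compatibility relation $\tfrac{\kappa}{p}=\tfrac{\kappa_0}{p_0}+\tfrac{\kappa_1}{p_1}$ among the parameters (the condition making the two sides of \eqref{HolderBlock} scale consistently); when $\rho_j>\sigma_k$ one argues symmetrically, enclosing the support in $D(y_k,\sigma_k)$ and using the exponent $\kappa_0/p_0\ge0$. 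This is exactly the required block estimate.

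The remaining points are routine. The identity $fg=\sum_{j,k}\alpha_j\beta_k a_jb_k$ holds because the defining series for $f$ and $g$ converge in the respective block spaces (hence, along subsequences, a.e.) and the double series is absolutely summable, so a Fubini/dominated-convergence argument justifies the rearrangement. For the endpoint case $p=q=1$ with $\tfrac1{p_0}+\tfrac1{p_1}=1$ and $\tfrac1{q_0}+\tfrac1{q_1}\ge 1$, the same scheme applies, using the endpoint Lorentz–Hölder embedding $L^{p_0,q_0}\cdot L^{p_1,q_1}\hookrightarrow L^{1,1}=L^1$ in place of \eqref{Holder} and interpreting $\mathcal{PD}_{1,1,\kappa}$ and its blocks through the total-variation norm, consistently with the convention adopted for $\mathcal{M}_{1,\lambda}$. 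The main obstacle is not any single hard estimate but the bookkeeping of the support geometry: one must take the \emph{smaller} of the two radii as the radius of the ball enclosing $a_jb_k$, and it is precisely this choice — together with $\kappa_0,\kappa_1\ge0$ — that turns the leftover factor $(\rho_j/\sigma_k)^{\kappa_1/p_1}$ into something $\le 1$; and one must keep the constant $C$ genuinely independent of the blocks, which is what legitimizes the block-by-block reduction.
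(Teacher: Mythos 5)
The paper itself contains no proof of this lemma; it is quoted (with typos) from \cite{Fe2016}, Lemma 3.5, so there is no in-paper argument to compare against. Your block-decomposition proof is the natural one and is correct: decompose $f$ and $g$ into blocks, observe that each product $a_jb_k$ is supported in the ball of the \emph{smaller} radius, apply the Lorentz--H\"older inequality there together with monotonicity of the Lorentz norm under restriction, and conclude that $C^{-1}a_jb_k$ is a $(p,q,\kappa)$-block, after which summation and the infimum over decompositions give \eqref{HolderBlock}. One point deserves emphasis: your argument genuinely uses the compatibility relation $\frac{\kappa}{p}=\frac{\kappa_0}{p_0}+\frac{\kappa_1}{p_1}$ (equivalently $\omega_{p,\kappa}=\omega_{p_0,\kappa_0}+\omega_{p_1,\kappa_1}$), which you correctly supplied but which is missing from the statement as printed here (just as ``$\frac{1}{1_0}$'' should read ``$\frac{1}{q_0}$''); without some such relation the inequality cannot hold with a uniform constant, by the scaling of the block norms, so this is an omission in the quoted statement rather than a defect of your proof. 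The only loose step is the a.e.\ identity $fg=\sum_{j,k}\alpha_j\beta_k\,a_jb_k$: convergence in the block norm by itself is not the cleanest justification, but the rearrangement is standard — replacing $b_k$ by $|b_k|$ (still a block) and testing nonnegative partial sums against indicators of balls via the duality with Morrey--Lorentz spaces shows the defining series converge absolutely a.e., after which Tonelli/Fubini for series applies — so this does not affect correctness.
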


Finally, we recall estimates for the heat semigroup $\left\{ e^{t\Delta} \right\}_{t\geqslant 0}$ acting on $\mathcal{P}\mathcal{D}_{p,q,\kappa}$-spaces (see \cite[Lemma 3.6]{Fe2016}):
\begin{lemma}\label{disBlock}
Let $1<p_1,p_2\leq \infty$, $1\leq q_1\leq q_2\leq \infty$, $\kappa_1,\kappa_2\geqslant 0$, $m\in \left\{ 0\right\} \cup \mathbb{M}$ and $\omega_{p_2,\kappa_2}\leq \omega_{p_1,\kappa_1}$, where $\omega_{p_1,\kappa_i} = \dfrac{n+\kappa_i}{p_i}$. Moreover, we assume that $0\leq \kappa_1\dfrac{p'_1}{p_1} = \kappa_2\dfrac{p'_2}{p_2} <n $ when $p_1\leq p_2$, where $\dfrac{1}{p_1}+\dfrac{1}{p_1'} = \dfrac{1}{p_2}+\dfrac{1}{p_2'}=1$. Suppose also that $q_i = \infty$ when
$p_i = \infty$. Then, there is a constant $C > 0$ such that
\begin{equation}
\norm{\nabla_x^me^{t\Delta}\varphi}_{\mathcal{P}\mathcal{D}_{p_2,q_2,\kappa_2}} \leq Ct^{-\frac{m}{2}-\frac{1}{2}(\omega_{p_1,\kappa_1} - \omega_{p_2,\kappa_2})}\norm{\varphi}_{\mathcal{P}\mathcal{D}_{p_1,q_1,\kappa_1}}
\end{equation}
for all $\varphi \in \mathcal{P}\mathcal{D}_{p_1,q_1,\kappa_1}$.
\end{lemma}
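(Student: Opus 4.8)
The plan is to derive the claimed bound by \emph{duality} from the Morrey--Lorentz dispersive estimate of Lemma~\ref{Disp}, exploiting the self-adjointness of the heat semigroup. For $i=1,2$ set $\lambda_i:=\kappa_i\,p_i'/p_i$, so that under the stated hypotheses $0\leq\lambda_i<n$ and Lemma~\ref{DualBlock} identifies $(\mathcal{P}\mathcal{D}_{p_i,q_i,\kappa_i})^{*}=\mathcal{M}_{p_i',q_i',\lambda_i}$. A direct computation using $\tfrac{1}{p_i}+\tfrac{1}{p_i'}=1$ gives the identity $\tau_{p_i',\lambda_i}=\tfrac{n-\lambda_i}{p_i'}=n-\omega_{p_i,\kappa_i}$, which is exactly what will make the time exponents match.

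First I would settle \emph{a priori membership}. If $\varphi$ is a finite linear combination of $(p_1,q_1,\kappa_1)$-blocks then $\varphi$ is compactly supported and lies in $L^1$, so (since, for $t>0$, $\nabla_x^m G_t$ is a Schwartz function, $G_t$ denoting the heat kernel) $\nabla_x^m e^{t\Delta}\varphi$ is smooth and rapidly decreasing, hence belongs to every block space — decompose space dyadically and use the rapid decay to sum the block coefficients. For such $\varphi$ the isometric embedding $\mathcal{P}\mathcal{D}_{p_2,q_2,\kappa_2}\hookrightarrow(\mathcal{P}\mathcal{D}_{p_2,q_2,\kappa_2})^{**}$ together with Lemma~\ref{DualBlock} give
\[
\norm{\nabla_x^m e^{t\Delta}\varphi}_{\mathcal{P}\mathcal{D}_{p_2,q_2,\kappa_2}}=\sup\Big\{\,\big|\langle\nabla_x^m e^{t\Delta}\varphi,\psi\rangle\big|\ :\ \psi\in\mathcal{M}_{p_2',q_2',\lambda_2},\ \norm{\psi}_{p_2',q_2',\lambda_2}\leq1\,\Big\}.
\]
Since $\nabla_x^m e^{t\Delta}\varphi$ is rapidly decreasing while $\psi$ has at most polynomial growth in its local $L^{p_2',q_2'}$-averages, the pairing is an absolutely convergent integral; integrating by parts and using the self-adjointness of $e^{t\Delta}$ gives $\langle\nabla_x^m e^{t\Delta}\varphi,\psi\rangle=(-1)^m\langle\varphi,\nabla_x^m e^{t\Delta}\psi\rangle$, and then the duality $\mathcal{P}\mathcal{D}_{p_1,q_1,\kappa_1}\times(\mathcal{P}\mathcal{D}_{p_1,q_1,\kappa_1})^{*}$ yields
\[
\big|\langle\nabla_x^m e^{t\Delta}\varphi,\psi\rangle\big|\leq\norm{\varphi}_{\mathcal{P}\mathcal{D}_{p_1,q_1,\kappa_1}}\,\norm{\nabla_x^m e^{t\Delta}\psi}_{p_1',q_1',\lambda_1}.
\]
It then remains to apply Lemma~\ref{Disp} to $\psi\mapsto\nabla_x^m e^{t\Delta}\psi$, mapping $\mathcal{M}_{p_2',q_2',\lambda_2}$ into $\mathcal{M}_{p_1',q_1',\lambda_1}$, and to take the supremum over $\psi$. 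A general $\varphi\in\mathcal{P}\mathcal{D}_{p_1,q_1,\kappa_1}$ is recovered by applying the resulting inequality to the partial sums of a block decomposition of $\varphi$, observing that they form a Cauchy sequence in $\mathcal{P}\mathcal{D}_{p_2,q_2,\kappa_2}$, identifying the limit with $\nabla_x^m e^{t\Delta}\varphi$ in $\mathcal{S}'$, and passing the bound to the limit.

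It remains to check that the hypotheses of Lemma~\ref{Disp} hold for this pair of spaces. By $\tau_{p_i',\lambda_i}=n-\omega_{p_i,\kappa_i}$ the assumption $\omega_{p_2,\kappa_2}\leq\omega_{p_1,\kappa_1}$ becomes $\tau_{p_1',\lambda_1}\leq\tau_{p_2',\lambda_2}$; $q_1\leq q_2$ becomes $q_2'\leq q_1'$; the requirement ``$\lambda=\mu$ when $p\leq r$'' of Lemma~\ref{Disp} (here $p=p_2'$, $r=p_1'$, so $p\leq r$ iff $p_1\leq p_2$) is precisely the assumed identity $\kappa_1 p_1'/p_1=\kappa_2 p_2'/p_2$; and $0\leq\kappa_1 p_1'/p_1<n$ is $0\leq\lambda_1<n$. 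The endpoint cases $p_i=\infty$ (whence $q_i=\infty$ and $p_i'=q_i'=1$) are handled by the supplementary clauses of Lemmas~\ref{Disp} and~\ref{DualBlock}, or, avoiding duality entirely, by a direct argument on a single block $b$: translating the support near the origin, split $\nabla_x^m e^{t\Delta}b$ over the ball of radius $\sim\rho+\sqrt t$ and the dyadic annuli around it, bounding the $L^{p_2,q_2}$-norm on the central ball by Young's inequality in Lorentz spaces against $\nabla_x^m G_t$ and the contributions of the far annuli by the Gaussian decay of the kernel, then reassembling $(p_2,q_2,\kappa_2)$-blocks. Finally, the time exponent delivered by Lemma~\ref{Disp} is $-\tfrac{m}{2}-\tfrac12\big(\tau_{p_2',\lambda_2}-\tau_{p_1',\lambda_1}\big)=-\tfrac{m}{2}-\tfrac12\big(\omega_{p_1,\kappa_1}-\omega_{p_2,\kappa_2}\big)$, exactly as asserted. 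I expect the main obstacle not to be the dualization, which is essentially formal once Lemma~\ref{Disp} is in hand, but the careful handling of the endpoint exponents and the rigorous justification of the a priori membership step and of the integration-by-parts identity for test functions $\psi$ of Morrey-type growth.
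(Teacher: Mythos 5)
The paper does not actually prove this lemma: it is quoted verbatim from \cite[Lemma 3.6]{Fe2016}, so there is no in-paper argument to compare against. Your duality reconstruction is sound and is essentially the natural way to transfer the Morrey--Lorentz dispersive estimate of Lemma \ref{Disp} to the predual (block) spaces: the exponent bookkeeping $\tau_{p_i',\lambda_i}=(n-\lambda_i)/p_i'=n-\omega_{p_i,\kappa_i}$ with $\lambda_i=\kappa_i p_i'/p_i$ is correct, the hypotheses of Lemma \ref{Disp} for the pair $(\mathcal{M}_{p_2',q_2',\lambda_2},\mathcal{M}_{p_1',q_1',\lambda_1})$ do match the assumptions of Lemma \ref{disBlock} exactly as you check (including the ``$\lambda=\mu$ when $p\leq r$'' clause becoming the assumed identity $\kappa_1p_1'/p_1=\kappa_2p_2'/p_2$ for $p_1\leq p_2$), and using the canonical embedding of $\mathcal{P}\mathcal{D}_{p_2,q_2,\kappa_2}$ into its bidual together with the pairing identity $\langle\nabla_x^me^{t\Delta}\varphi,\psi\rangle=(-1)^m\langle\varphi,\nabla_x^me^{t\Delta}\psi\rangle$ (justified first for finite sums of blocks, then by density) is the right mechanism, since one cannot simply ``dualize'' an estimate on the dual back to the predual without such an argument. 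The only places requiring care beyond your sketch are exactly the ones you flag: Lemma \ref{DualBlock} as stated needs $p_i,q_i<\infty$ and $\kappa>0$ (so the cases $p_i=\infty$, $q_2=\infty$, or $\kappa_i=0$ are not covered by the duality identification), and the duality route implicitly needs $\lambda_i<n$, which the hypotheses only guarantee when $p_1\leq p_2$; for these endpoints your alternative direct estimate on a single block (near-ball contribution via Young's inequality in Lorentz spaces against $\nabla_x^mG_t$, far annuli via Gaussian decay, then reassembling $(p_2,q_2,\kappa_2)$-blocks) is the appropriate fix and is in the spirit of how such block-space semigroup estimates are proved in the cited literature. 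In short: correct in outline, with the endpoint cases rightly identified as the only nontrivial residual work.
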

Using Lemmas \ref{Interpolation} and \ref{disBlock}, we can establish Yamazaki-type estimate (see the origin Yamazaki estimate in \cite[pages 648-650]{Ya2000}) in the framework of $\mathcal{P}\mathcal{D}_{p,q,\kappa}$-spaces (see the proof in \cite[Lemma 5.1]{Fe2016}).
\begin{lemma}\label{YamazakiType}
Let $1<p<r<\infty$ and $\kappa,\alpha>0$ be such that $0\leq \alpha \dfrac{r'}{r} = \kappa\dfrac{p'}{p}<n$, where $\dfrac{1}{r}+\dfrac{1}{r'}=1$ and $\dfrac{1}{p}+\dfrac{1}{p'}=1$. Let $\omega_{r,\alpha} = \dfrac{n+\alpha}{r}$ and $\omega_{p,\kappa}=\dfrac{n+\kappa}{p}$. There is a constant $C>0$ satisfying that
\begin{equation}
\int_0^\infty s^{\frac{1}{2}(\omega_{p,\kappa}-\omega_{r,\alpha})-\frac{1}{2}}\norm{\nabla_xe^{s\Delta}\varphi}_{\mathcal{P}\mathcal{D}_{r,1,\alpha}} ds \leq C\norm{\varphi}_{\mathcal{P}\mathcal{D}_{p,1,\kappa}},
\end{equation}
for all $\varphi \in \mathcal{P}\mathcal{D}_{p,1,\kappa}$.
\end{lemma}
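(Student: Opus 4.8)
\emph{Proof proposal.} The plan is to recognize the left-hand side as the $L^{1}$-in-time norm of $\varphi$ under a fixed linear operator taking values in the block space $\mathcal{P}\mathcal{D}_{r,1,\alpha}$, and to obtain that bound by real interpolation between two endpoint estimates neither of which is, by itself, an $L^{1}$-estimate; this is exactly the Yamazaki mechanism. Put $\gamma:=\tfrac12(\omega_{p,\kappa}-\omega_{r,\alpha})-\tfrac12$ and, for $\varphi$ in a block space, define the $\mathcal{P}\mathcal{D}_{r,1,\alpha}$-valued function $(\mathcal{T}\varphi)(s):=s^{\gamma}\nabla_x e^{s\Delta}\varphi$ for $s>0$. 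The assertion is precisely that $\mathcal{T}$ is bounded from $\mathcal{P}\mathcal{D}_{p,1,\kappa}$ into $L^{1}((0,\infty);\mathcal{P}\mathcal{D}_{r,1,\alpha})$. Applying Lemma \ref{disBlock} with $m=1$ directly to the pair $(p,1,\kappa)\to(r,1,\alpha)$ gives only $\|(\mathcal{T}\varphi)(s)\|_{\mathcal{P}\mathcal{D}_{r,1,\alpha}}\le Cs^{-1}\|\varphi\|_{\mathcal{P}\mathcal{D}_{p,1,\kappa}}$, whose right-hand side is non-integrable at both $s=0$ and $s=\infty$; this borderline scaling is the whole difficulty and is what the interpolation resolves.

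Write $c:=\kappa\tfrac{p'}{p}=\alpha\tfrac{r'}{r}\in[0,n)$, so that $\omega_{p,\kappa}=\tfrac{n-c}{p}+c$, $\omega_{r,\alpha}=\tfrac{n-c}{r}+c$ and $n-c>0$. I would fix auxiliary exponents $1<p_0<p<p_1<r$, set $\kappa_i:=c(p_i-1)$ for $i=0,1$ (so that $\kappa_i\tfrac{p_i'}{p_i}=c$ and $\omega_{p_i,\kappa_i}=\tfrac{n-c}{p_i}+c$), let $\theta\in(0,1)$ be determined by $\tfrac1p=\tfrac{1-\theta}{p_0}+\tfrac{\theta}{p_1}$, and require $p_0,p_1$ to be close enough to $p$ that $\omega_{p,\kappa}-\omega_{p_1,\kappa_1}<2$. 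A one-line computation then yields $\tfrac{\kappa}{p}=\tfrac{(1-\theta)\kappa_0}{p_0}+\tfrac{\theta\kappa_1}{p_1}$ and $(1-\theta)\omega_{p_0,\kappa_0}+\theta\omega_{p_1,\kappa_1}=\omega_{p,\kappa}$, so the hypotheses of Lemma \ref{Interpolation} (hence the embedding \eqref{IncluBlock} with endpoint second index $1$) are met, and since $p_i<r$ and $c<n$ those of Lemma \ref{disBlock} for the gradient estimate $\mathcal{P}\mathcal{D}_{p_i,1,\kappa_i}\to\mathcal{P}\mathcal{D}_{r,1,\alpha}$ are met as well. The remaining strict inequalities $\omega_{p_1,\kappa_1}<\omega_{p,\kappa}<\omega_{p_0,\kappa_0}$ and $\omega_{p_i,\kappa_i}>\omega_{r,\alpha}$ follow for free from $p_0<p<p_1<r$ and $n-c>0$.

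Next I would run the two endpoints. Lemma \ref{disBlock} gives $\|\nabla_x e^{s\Delta}\varphi\|_{\mathcal{P}\mathcal{D}_{r,1,\alpha}}\le Cs^{-\frac12-\frac12(\omega_{p_i,\kappa_i}-\omega_{r,\alpha})}\|\varphi\|_{\mathcal{P}\mathcal{D}_{p_i,1,\kappa_i}}$, hence $\|(\mathcal{T}\varphi)(s)\|_{\mathcal{P}\mathcal{D}_{r,1,\alpha}}\le Cs^{\beta_i}\|\varphi\|_{\mathcal{P}\mathcal{D}_{p_i,1,\kappa_i}}$ with $\beta_i:=\tfrac12(\omega_{p,\kappa}-\omega_{p_i,\kappa_i})-1$, where $\beta_0<-1$ and $-1<\beta_1<0$ by the choice of $p_0,p_1$. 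Since the pure power $s\mapsto s^{\beta}$ lies in $L^{-1/\beta,\infty}((0,\infty))$ for every $\beta<0$, this means $\mathcal{T}\colon\mathcal{P}\mathcal{D}_{p_i,1,\kappa_i}\to L^{a_i,\infty}((0,\infty);\mathcal{P}\mathcal{D}_{r,1,\alpha})$ is bounded with $a_i:=-1/\beta_i$ and $a_0<1<a_1$. As $\mathcal{T}$ is linear and consistently defined on both block spaces, interpolating these two bounds and using the vector-valued Lorentz identity $(L^{a_0,\infty}(Y),L^{a_1,\infty}(Y))_{\theta,1}=L^{1,1}(Y)=L^{1}(Y)$ for $Y=\mathcal{P}\mathcal{D}_{r,1,\alpha}$ — which applies because $\tfrac{1-\theta}{a_0}+\tfrac{\theta}{a_1}=-(1-\theta)\beta_0-\theta\beta_1=1$, by $(1-\theta)\omega_{p_0,\kappa_0}+\theta\omega_{p_1,\kappa_1}=\omega_{p,\kappa}$ — gives $\mathcal{T}\colon(\mathcal{P}\mathcal{D}_{p_0,1,\kappa_0},\mathcal{P}\mathcal{D}_{p_1,1,\kappa_1})_{\theta,1}\to L^{1}((0,\infty);\mathcal{P}\mathcal{D}_{r,1,\alpha})$, and composing with the embedding \eqref{IncluBlock} completes the proof.

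The \textbf{main obstacle} is the bookkeeping that makes the two endpoints compatible: one must produce a single $\theta$ that simultaneously interpolates $(p_0,\kappa_0)$ and $(p_1,\kappa_1)$ back to $(p,\kappa)$ in the sense of Lemma \ref{Interpolation} \emph{and} interpolates the time exponents $a_0<1<a_1$ to exactly $a=1$, so that the target is genuinely $L^{1}$ in $s$ rather than a strictly larger Lorentz space $L^{1,q}$ with $q>1$; this is Yamazaki's observation, and it works precisely because $\omega_{p,\kappa}=\tfrac{n-c}{p}+c$ is affine in $1/p$. A minor technical point to settle is that \eqref{IncluBlock} (equivalently Lemma \ref{Interpolation}) is used here with endpoint second index $q=1$, at the boundary of the stated hypotheses; this can be taken from \cite{Fe2016}, or, to stay self-contained, replaced by a direct argument with the $K$-functional of the pair $(\mathcal{P}\mathcal{D}_{p_0,1,\kappa_0},\mathcal{P}\mathcal{D}_{p_1,1,\kappa_1})$: for each $s$ one decomposes $\varphi=\varphi_0(s)+\varphi_1(s)$ near-optimally, estimates $\|(\mathcal{T}\varphi)(s)\|_{\mathcal{P}\mathcal{D}_{r,1,\alpha}}\le Cs^{\beta_0}\|\varphi_0(s)\|_{\mathcal{P}\mathcal{D}_{p_0,1,\kappa_0}}+Cs^{\beta_1}\|\varphi_1(s)\|_{\mathcal{P}\mathcal{D}_{p_1,1,\kappa_1}}$, and integrates in $s$.
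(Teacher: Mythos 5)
Your argument is correct and is essentially the paper's own route: the paper proves this lemma by citing \cite[Lemma 5.1]{Fe2016}, whose proof is exactly the Yamazaki mechanism you reconstruct, i.e.\ two off-diagonal gradient heat estimates from Lemma \ref{disBlock} along the same scaling line $\kappa p'/p=\alpha r'/r=c$, giving weak $L^{a_0,\infty}$ and $L^{a_1,\infty}$ bounds in time with $a_0<1<a_1$, followed by real interpolation of the operator via Lemma \ref{Interpolation} to land in $L^{1}((0,\infty);\mathcal{P}\mathcal{D}_{r,1,\alpha})$. Your bookkeeping (affinity of $\omega_{p,\kappa}$ in $1/p$, the choice of $\theta$, and the caveat about the second index $q=1$, handled as in \cite{Fe2016} or by a direct $K$-functional argument) matches that proof.
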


\section{Asymptotic stability of steady Boussinesq system}\label{S3}
\subsection{The disturbance system}
Since $\dive{\bar u} = \dive u =0$ we have the following identities
$$(u\cdot \nabla)\theta = \dive (\theta u),\, (\bar{u}\cdot \nabla) u =\dive{(u\otimes \bar{u})}, (u\cdot \nabla) \bar{u} =\dive{(\bar{u}\otimes u)} \hbox{   and   } (u\cdot \nabla)u = \dive(u\otimes u).$$
Therefore, the Boussinesq system \eqref{DisturBouEq} can be rewritten as
\begin{equation}\label{BouEq1} 
\left\{
  \begin{array}{rll}
 u_t - \Delta u + \mathbb{P}\dive(u\otimes u+u\otimes \bar{u}+\bar{u}\otimes u) \!\! &= \kappa\mathbb{P}(\theta g) \quad  & x\in \mathbb{R}^n,\, t>0, \hfill \\
\dive u \!\!&=\; 0 \quad &x\in \mathbb{R}^n,\, t\geq 0, \\
\theta_t - \Delta \theta + \dive(\theta u + \theta\bar{u} + \bar{\theta}u) \!\!&=\; 0 \quad &x\in \mathbb{R}^n,\, t>0, \\
u(x,0) \!\!& = \;u_0(x) \quad & x\in \mathbb{R}^n,\\
\theta(x,0) \!\!& = \;\theta_0(x) \quad & x\in \mathbb{R}^n,\\
(u,\theta) \!\!& \to \;(0,0) \quad & \hbox{as  } |x| \to \infty,\, t>0,
\end{array}\right.
\end{equation}
where $\mathbb{P}$ is the Leray projector which is a matrix $n\times n$ with elements $(\mathbb{P})_{k,j} = \delta_{kj} + \mathcal{R}_k\mathcal{R}_j$, where $\mathcal{R}_j = \partial_j(-\Delta)^{-\frac{1}{2}}\, (j=1,2...n)$ are the Reisz transform. We notice that the Reisz transform $\mathcal{R}_j$ is continuous form $\mathcal{M}_{p,q,\lambda}(\mathbb{R}^n)$ to itself, for each $j=1,2...n$ (see \cite[Lemma 2.3]{Fe2016}).

\def\A{\mathcal{A}}
We denote the set of all vector $u:\mathbb{R}^n\to \mathbb{R}$ such that $\dive u = 0$ and $u$ belongs to $\mathcal{M}_{p,q,\lambda}$ by $\mathcal{M}_{p,q,\lambda}^\sigma$.
We will consider the well-posedness of \eqref{BouEq1} in the following time-dependent functional space based on the Cartesian product of weak-Morrey spaces 
$$H_{p,\infty}:= BC(\mathbb{R}_+,\mathcal{M}^\sigma_{p,\infty,\lambda}(\mathbb{R}^n)\times \mathcal{M}_{p,\infty,\lambda}(\mathbb{R}^n)), \hbox{  where  } \lambda=n-p,$$ 
endowed with the norm
$$\norm{ \begin{bmatrix} u\\ 
\theta
\end{bmatrix} }_{H_{p,\infty}}: = \sup_{t>0} \norm{u(\cdot,t)}_{p,\infty,\lambda} + \norm{\theta(\cdot,t)}_{p,\infty,\lambda}.$$
To this purpose, we set the operator
$L:=\begin{bmatrix}-\Delta& 0\\ 
0& -\Delta
\end{bmatrix}$ acting on the Cartesian product space $\mathcal{M}^\sigma_{p,\infty,\lambda}(\mathbb{R}^n)\times \mathcal{M}_{p,\infty,\lambda}(\mathbb{R}^n)$, $\lambda=n-p$. Then, equation \eqref{BouEq1} is rewritten {in the operator matrix form} as 
\begin{align}\label{AbstractEquation}
\begin{cases}
\displaystyle\frac{\partial }{\partial t} \begin{bmatrix}
u\\
\theta
\end{bmatrix}
  + L\begin{bmatrix}
u\\
\theta
\end{bmatrix} = \begin{bmatrix}
\mathbb{P}[- \dive (u\otimes u)] \\
\dive(-\theta u)
\end{bmatrix} + \begin{bmatrix}
\mathbb{P}[- \dive (u\otimes \bar{u}+ \bar{u}\otimes u)] \\
- \dive(\theta \bar{u} + \bar{\theta} u )
\end{bmatrix} + \begin{bmatrix}
\kappa\mathbb{P}(\theta g)\\
0
\end{bmatrix}\\
\begin{bmatrix}
u(0)\\
\theta(0)
\end{bmatrix}  = \begin{bmatrix}
u_0\cr \theta_0
\end{bmatrix}.
\end{cases}
\end{align}

According to Duhamel's principle we define the mild solution of the equation \eqref{AbstractEquation} as follows. 
\begin{definition}   
Assume that the steady solution of system \eqref{StBouEq} satisfies that $\begin{bmatrix}
\bar{u}\\
\bar{\theta} 
\end{bmatrix} \in \mathcal{M}^\sigma_{p,\infty,\lambda}\times \mathcal{M}_{p,\infty,\lambda}$, $\lambda=n-p$.
The mild solution of the equation \eqref{AbstractEquation} is defined as the solution of the integral equation 
\begin{equation}\label{mildsol}
\begin{bmatrix}
u\\
\theta 
\end{bmatrix}(t) = e^{-tL}\begin{bmatrix}
u_0\\
\theta_0
\end{bmatrix} + B\left(\begin{bmatrix}
u\\
\theta
\end{bmatrix}, \begin{bmatrix}
u\\
\theta
\end{bmatrix} \right)(t) + \mathcal{T}\left(\begin{bmatrix}
u\\
\theta
\end{bmatrix} \right)(t) + T_g(\theta)(t),
\end{equation}
where bilinear and linear-coupling operators used in the above equation are
\begin{equation}\label{Bilinear}
B\left(\begin{bmatrix}
u\\
\theta
\end{bmatrix}, \begin{bmatrix}
v\\
\xi
\end{bmatrix} \right)(t): = -\int_0^t \nabla_x e^{-(t-s)L}\begin{bmatrix}\mathbb{P}(u\otimes v)\\
u\xi
\end{bmatrix}(s)ds,
\end{equation}
\begin{equation}
\mathcal{T}\left(\begin{bmatrix}
u\\
\theta
\end{bmatrix} \right)(t): = -\int_0^t \nabla_x e^{-(t-s)L} \begin{bmatrix}
\mathbb{P}(u\otimes \bar{u}+ \bar{u}\otimes u) \\
\theta \bar{u} + \bar{\theta} u 
\end{bmatrix}(s)ds
\end{equation}
and
\begin{equation}\label{Couple}
T_g(\theta)(t):= \int_0^t e^{-(t-s)L}\begin{bmatrix}\kappa\mathbb{P}(\theta g)\\
0
\end{bmatrix}(s) ds.
\end{equation}
\end{definition}

\subsection{Global well-posedness in $H_{p,\infty}$}\label{Hp}
We consider the linear operator
\begin{equation}\label{linearOp}
\mathcal{C}\begin{bmatrix}
f_1\\
f_2
\end{bmatrix}(x) = \int_0^\infty\nabla_x e^{-sL}\begin{bmatrix}
f_1\\
f_2
\end{bmatrix}(\cdot,s)ds
\end{equation}
By using the duality and Lemma \ref{YamazakiType} we estimate \eqref{linearOp} in the following lemma.
\begin{lemma}\label{LinearEst}
Let $n\geqslant 3$, $1<r<l<\infty$ and $0\leq \lambda <n$. Assume that $\tau_{r,\lambda}-\tau_{l,\lambda}=1$, where $\tau_{l,\lambda}=\dfrac{n-\lambda}{l}$ and $\tau_{r,\lambda}=\dfrac{n-\lambda}{r}$.
There is a constant $C > 0$ such that
\begin{equation}\label{linearEs}
\norm{\mathcal{C}\begin{bmatrix}
f_1\\
f_2
\end{bmatrix}}_{l,\infty,\lambda} \leq C\sup_{t>0}\norm{\begin{bmatrix}
f_1\\
f_2
\end{bmatrix}(\cdot,t)}_{r,\infty,\lambda}
\end{equation}
for all $(f_1,f_2)\in BC(\mathbb{R}_+, \mathcal{M}_{r,\infty,\lambda}\times \mathcal{M}_{r,\infty,\lambda})$ and we denote $\norm{.}_{\mathcal{M}_{l,\infty,\lambda}\times \mathcal{M}_{l,\infty,\lambda}}:=\norm{.}_{l,\infty,\lambda}$ and $\norm{.}_{\mathcal{M}_{r,\infty,\lambda}\times \mathcal{M}_{r,\infty,\lambda}}:=\norm{.}_{r,\infty,\lambda}$. The supremum on $t > 0$ in \eqref{linearEs} is the essential one.
\end{lemma}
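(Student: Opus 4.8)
The plan is to reduce \eqref{linearEs} to a scalar estimate and then obtain it from a duality argument powered by the Yamazaki-type bound of Lemma~\ref{YamazakiType}. Since $e^{-sL}$ acts diagonally and both sides of \eqref{linearEs} are built from the $\mathcal{M}_{\cdot,\infty,\lambda}$-norms of (the scalar entries of) the two components, it suffices to prove
\[
\norm{\int_0^\infty \nabla_x e^{s\Delta}f(\cdot,s)\,ds}_{l,\infty,\lambda}\le C\,\esssup_{s>0}\norm{f(\cdot,s)}_{r,\infty,\lambda},
\]
the time integral being read as an improper integral. One should note at the outset why the direct route fails: by Lemma~\ref{Disp}, $\norm{\nabla_x e^{s\Delta}f(\cdot,s)}_{l,\infty,\lambda}\lesssim s^{-\frac12-\frac12(\tau_{r,\lambda}-\tau_{l,\lambda})}\norm{f(\cdot,s)}_{r,\infty,\lambda}=s^{-1}\norm{f(\cdot,s)}_{r,\infty,\lambda}$, and $s^{-1}$ is integrable neither at $0$ nor at $\infty$, so the triangle inequality inside the integral gives nothing and the bound has to be squeezed out by testing against the predual.

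Then I would run the duality. Assume $\lambda>0$ (the case $\lambda=0$ is identical, with the block spaces replaced by $L^{r',1},L^{l',1}$ and Lemma~\ref{YamazakiType} by its classical analogue). Set $\kappa:=\lambda l'/l$ and $\kappa':=\lambda r'/r$, so that $\lambda/l=\kappa/l'$ and $\lambda/r=\kappa'/r'$; by Lemma~\ref{DualBlock} this yields $\mathcal{M}_{l,\infty,\lambda}=(\mathcal{P}\mathcal{D}_{l',1,\kappa})^*$ and $\mathcal{M}_{r,\infty,\lambda}=(\mathcal{P}\mathcal{D}_{r',1,\kappa'})^*$. Writing $g:=\int_0^\infty\nabla_x e^{s\Delta}f(\cdot,s)\,ds$ and testing against $\phi$ with $\norm{\phi}_{\mathcal{P}\mathcal{D}_{l',1,\kappa}}\le1$, I move the operator onto $\phi$ by Fubini, self-adjointness of $e^{s\Delta}$ and integration by parts, obtaining $\tvh{g,\phi}=-\int_0^\infty\tvh{f(\cdot,s),\dive(e^{s\Delta}\phi)}\,ds$; pairing $\mathcal{M}_{r,\infty,\lambda}$ with $\mathcal{P}\mathcal{D}_{r',1,\kappa'}$ then gives $\abs{\tvh{g,\phi}}\le\big(\esssup_{s>0}\norm{f(\cdot,s)}_{r,\infty,\lambda}\big)\int_0^\infty\norm{\dive(e^{s\Delta}\phi)}_{\mathcal{P}\mathcal{D}_{r',1,\kappa'}}\,ds$.

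What remains is the time-integral bound $\int_0^\infty\norm{\dive(e^{s\Delta}\phi)}_{\mathcal{P}\mathcal{D}_{r',1,\kappa'}}\,ds\le C\norm{\phi}_{\mathcal{P}\mathcal{D}_{l',1,\kappa}}$, and this is precisely Lemma~\ref{YamazakiType} applied to each scalar component of $\phi$ (every entry of $\dive(e^{s\Delta}\phi)$ is a single first-order derivative of $e^{s\Delta}\phi_j$, estimated exactly as $\nabla_x e^{s\Delta}$). In that lemma I take $l'$ for $p$, $r'$ for $r$, and $\kappa,\kappa'$ for $\kappa,\alpha$: the ordering $1<l'<r'<\infty$ holds because $1<r<l<\infty$, and the compatibility $\kappa'\tfrac{r}{r'}=\lambda=\kappa\tfrac{l}{l'}\in[0,n)$ is built into the choice of $\kappa,\kappa'$. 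The single place where the scaling hypothesis $\tau_{r,\lambda}-\tau_{l,\lambda}=1$ is used is to kill the $s$-weight in Lemma~\ref{YamazakiType}:
\[
\omega_{l',\kappa}-\omega_{r',\kappa'}=\frac{n+\kappa}{l'}-\frac{n+\kappa'}{r'}=\Big(n-\frac{n-\lambda}{l}\Big)-\Big(n-\frac{n-\lambda}{r}\Big)=\tau_{r,\lambda}-\tau_{l,\lambda}=1,
\]
so $s^{\frac12(\omega_{l',\kappa}-\omega_{r',\kappa'})-\frac12}=1$ and Lemma~\ref{YamazakiType} returns exactly the bound wanted; taking the supremum over admissible $\phi$ closes the scalar estimate, hence \eqref{linearEs}.

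I expect the delicate part to be not any single inequality but the rigour of the duality manipulation: (i) fixing the sense in which the improper integral defining $\mathcal{C}$ converges --- cleanest is to prove the estimate first for $f$ in a dense subclass, e.g.\ continuous with compact support in $s$, and then extend by the a~priori bound; (ii) justifying the interchange of the $s$-integral with the pairing and the integration by parts in that subclass; and (iii) keeping the tensor bookkeeping of the two components straight ($\mathbb{P}(u\otimes v)$ matrix-valued, $u\xi$ vector-valued) so that $\nabla_x e^{-sL}$ and its formal adjoint are always reduced to scalar applications of Lemmas~\ref{Disp} and~\ref{YamazakiType}. Once these are settled, the chain Lemma~\ref{DualBlock}, H\"older pairing, Lemma~\ref{YamazakiType}, together with the exponent arithmetic above, completes the proof.
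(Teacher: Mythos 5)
Your argument is correct and is essentially the paper's own route: the paper simply cites \cite[Lemma 3.2]{Fe2023}, whose proof is exactly this duality scheme — identify $\mathcal{M}_{l,\infty,\lambda}$ and $\mathcal{M}_{r,\infty,\lambda}$ with duals of the block spaces via Lemma \ref{DualBlock}, test $\mathcal{C}$ against the predual, and absorb the time integral with the Yamazaki-type estimate of Lemma \ref{YamazakiType}, the hypothesis $\tau_{r,\lambda}-\tau_{l,\lambda}=1$ serving only to make the $s$-weight equal to $1$. Your exponent bookkeeping ($\kappa=\lambda l'/l$, $\kappa'=\lambda r'/r$, $\omega_{l',\kappa}-\omega_{r',\kappa'}=1$) checks out, so no substantive gap remains beyond the density/limiting details you already flag.
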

\begin{proof}
The proof has been given in \cite[Lemma 3.2]{Fe2023}.
\end{proof}
Using inequality \eqref{linearEs} we state and prove the bilinear estimate in the following theorm.
\begin{theorem}\label{Bestimate}(Bilinear estimate in $H_{p,\infty}$).
Let $n\geqslant 3$ and $2<p\leq n$. Let $B(\cdot,\cdot)$ be the bilinear form \eqref{Bilinear}. There exists a constant $K > 0$ such that
\begin{equation}\label{bestimate}
\norm{B\left( \begin{bmatrix}
u\\
\theta
\end{bmatrix}, \begin{bmatrix}
v\\
\xi
\end{bmatrix} \right)}_{H_{p,\infty}} \leq K \norm{\begin{bmatrix}
u\\
\theta
\end{bmatrix}}_{H_{p,\infty}}\norm{\begin{bmatrix}
v\\
\xi
\end{bmatrix}}_{H_{p,\infty}}
\end{equation} 
for all $\begin{bmatrix}
u\\
\theta
\end{bmatrix}, \begin{bmatrix}
v\\
\xi
\end{bmatrix} \in H_{p,\infty}$.
\end{theorem}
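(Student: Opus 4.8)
The plan is to reduce the time-dependent bilinear estimate to the stationary linear estimate of Lemma~\ref{LinearEst}, exactly as in the Navier--Stokes and Boussinesq literature. Write the bilinear term as
\begin{equation*}
B\left(\begin{bmatrix} u\\ \theta\end{bmatrix},\begin{bmatrix} v\\ \xi\end{bmatrix}\right)(t) = -\int_0^t \nabla_x e^{-(t-s)L}\begin{bmatrix}\mathbb{P}(u\otimes v)\\ u\xi\end{bmatrix}(s)\,ds,
\end{equation*}
and observe that, since $e^{-tL}$ acts diagonally as the heat semigroup on each component and since $\mathbb{P}$ and the Riesz transforms are bounded on every $\mathcal{M}_{p,\infty,\lambda}$ (cited after \eqref{BouEq1}), the quantity to be bounded is, componentwise, of the form $\mathcal{C}[f_1,f_2]$-type expressions with $f_1 = \mathbb{P}(u\otimes v)$ and $f_2 = u\xi$. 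The first step is therefore to choose the intermediate exponent: with $p$ fixed and $2<p\le n$, set $r$ by $\tau_{r,\lambda}-\tau_{p,\lambda}=1$ where $\lambda=n-p$, i.e. $r=np/(n+p)$; one checks $1<r<p$ because $2<p\le n$ guarantees $r>1$, so Lemma~\ref{LinearEst} applies with $l=p$.

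Next I would run the H\"older inequality \eqref{HolderWM} in weak-Morrey spaces to control the source terms. The products $u\otimes v$ and $u\xi$ should land in $\mathcal{M}_{r,\infty,\lambda}$: taking $p_0=p_1=p$, $q_0=q_1=\infty$, $\lambda_0=\lambda_1=\lambda=n-p$, and $s=\infty$ in \eqref{HolderWM}, the hypotheses $\frac1r=\frac1p+\frac1p$ and $\frac{\beta}{r}=\frac{\lambda}{p}+\frac{\lambda}{p}$ force $\beta = n - r$ with $r=p/2$. Here lies the one genuine subtlety: the natural H\"older target is $\mathcal{M}_{p/2,\infty,n-p/2}$, not $\mathcal{M}_{r,\infty,\lambda}$ with the same $\lambda$ as the solution space. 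Because $\tau_{p/2,\,n-p/2} = \tfrac{2}{p}\cdot\tfrac{n}{\,}$... more carefully, $\tau_{p/2,n-p/2}=\frac{n-(n-p/2)}{p/2}=1$ and $\tau_{r,\lambda}=\tau_{np/(n+p),n-p}=\frac{n+p}{p}-1+\cdots$; one verifies the two target spaces share the same scaling exponent $\tau=\tau_{p,\lambda}-1$, and then invokes the Morrey--Lorentz inclusion displayed after Definition~\ref{weak-Morrey} (with $\frac{n-\mu}{p_1}=\frac{n-\lambda}{p_2}$, equal Lorentz indices) to embed $\mathcal{M}_{p/2,\infty,n-p/2}\hookrightarrow\mathcal{M}_{r,\infty,n-p}$. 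This matching of parameters is the step I expect to require the most care.

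With the embedding in hand the estimate closes mechanically. For each $t>0$ and each component one has, using Lemma~\ref{LinearEst} (applied to the truncated integral $\int_0^t$ rather than $\int_0^\infty$, which only improves the bound since the integrand is nonnegative in norm), boundedness of $\mathbb{P}$ and $\mathcal{R}_j$, and the H\"older step,
\begin{equation*}
\norm{B\left(\begin{bmatrix} u\\ \theta\end{bmatrix},\begin{bmatrix} v\\ \xi\end{bmatrix}\right)(t)}_{p,\infty,\lambda} \le C\sup_{s>0}\left(\norm{u(s)\otimes v(s)}_{r,\infty,\lambda}+\norm{u(s)\xi(s)}_{r,\infty,\lambda}\right) \le C'\sup_{s>0}\left(\norm{u(s)}_{p,\infty,\lambda}\norm{v(s)}_{p,\infty,\lambda}+\norm{u(s)}_{p,\infty,\lambda}\norm{\xi(s)}_{p,\infty,\lambda}\right).
\end{equation*}
Taking the supremum over $t>0$ on the left and bounding the right-hand side by $C'\norm{[u,\theta]^{T}}_{H_{p,\infty}}\norm{[v,\xi]^{T}}_{H_{p,\infty}}$ gives \eqref{bestimate} with $K=C'$. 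It remains only to check time-continuity of $t\mapsto B(\cdots)(t)$ into the product space — this follows from the strong continuity of the heat semigroup on Morrey--Lorentz spaces together with dominated convergence applied to the $s$-integral, using the integrable time singularity $(t-s)^{-1/2}$ coming from Lemma~\ref{Disp} (or Lemma~\ref{YamazakiType}) — so that $B$ indeed maps into $H_{p,\infty}=BC(\mathbb{R}_+,\mathcal{M}^\sigma_{p,\infty,\lambda}\times\mathcal{M}_{p,\infty,\lambda})$.
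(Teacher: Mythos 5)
Your overall strategy coincides with the paper's: write $B$ at each time $t$ as the stationary operator $\mathcal{C}$ of \eqref{linearOp} applied to suitable data, estimate the products by the weak-Morrey H\"older inequality, and conclude with Lemma \ref{LinearEst}. However, the parameter bookkeeping contains genuine errors that make the argument fail as written. First, with $\lambda=n-p$ the requirement $\tau_{r,\lambda}-\tau_{p,\lambda}=1$ of Lemma \ref{LinearEst} reads $\frac{p}{r}-1=1$, i.e. $r=\frac{p}{2}$, not $r=\frac{np}{n+p}$; your value solves the Lebesgue-type relation $\frac{n}{r}-\frac{n}{p}=1$ (the case $\lambda=0$), and with it the hypothesis of Lemma \ref{LinearEst} is violated except when $p=n$. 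Second, the H\"older inequality \eqref{HolderWM} with $p_0=p_1=p$, $\lambda_0=\lambda_1=n-p$, $s=\infty$ forces $\beta=r\left(\frac{\lambda_0}{p_0}+\frac{\lambda_1}{p_1}\right)=\frac{p}{2}\cdot\frac{2(n-p)}{p}=n-p$, not $n-r$: the products $u\otimes v$ and $u\xi$ land directly in $\mathcal{M}_{p/2,\infty,n-p}$, i.e. with the \emph{same} Morrey weight $\lambda=n-p$ as the solution space, so the ``subtlety'' you flag does not exist and no embedding is needed. Third, the patch you invoke, $\mathcal{M}_{p/2,\infty,n-p/2}\hookrightarrow\mathcal{M}_{np/(n+p),\infty,n-p}$, is not covered by the inclusion stated after Definition \ref{weak-Morrey}: that inclusion needs $p_1\leq p_2$ and $\frac{n-\mu}{p_1}=\frac{n-\lambda}{p_2}$, whereas here $\frac{np}{n+p}>\frac{p}{2}$ for $p<n$ and the two scaling exponents are $\frac{n+p}{n}$ and $1$, which never match; note also that the source space fed into $\mathcal{C}$ must have scaling exponent $\tau_{p,\lambda}+1=2$, not $\tau_{p,\lambda}-1$ as you wrote.

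A smaller point: Lemma \ref{LinearEst} bounds the norm of the full integral $\int_0^\infty$, not an integral of norms, so truncating to $\int_0^t$ does not ``only improve the bound'' by positivity of the integrand. The paper handles this by the zero-extension device: set $f_{1t}(\cdot,s)=\mathbb{P}(u\otimes v)(\cdot,t-s)$ and $f_{2t}(\cdot,s)=(u\xi)(\cdot,t-s)$ for $s\in(0,t)$ and $f_{1t}=f_{2t}=0$ for $s>t$, so that $B(\cdot,\cdot)(t)=\mathcal{C}\begin{bmatrix} f_{1t}\\ f_{2t}\end{bmatrix}$ exactly, with $\sup_{s>0}\norm{f_{it}(\cdot,s)}_{p/2,\infty,n-p}$ controlled, via H\"older and the boundedness of $\mathbb{P}$, by the product of the $H_{p,\infty}$ norms. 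With $r=\frac{p}{2}$ (which is $>1$ precisely because $p>2$) and this device, your argument becomes the paper's proof; the defect lies in the exponent computations, not in the idea.
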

\begin{proof}
We rewrite the bilinear term \eqref{Bilinear} as follows
$$B\left(\begin{bmatrix}
u\\
\theta
\end{bmatrix}, \begin{bmatrix}
v\\
\xi
\end{bmatrix} \right)(t): = -\int_0^t \nabla_x e^{-(t-s)L}\begin{bmatrix}\mathbb{P}(u\otimes v)\\
u\xi
\end{bmatrix}(s)ds = \mathcal{C}\begin{bmatrix}
f_{1t}\\
f_{2t}
\end{bmatrix}$$
with $f_{1t}$ and $f_{2t}$ given by
$$f_{1t} = \mathbb{P}(u\otimes v)(\cdot, t-s), \, f_{2t} = (u\xi)(\cdot,t-s) \hbox{  for a.e.  } s\in (0,t),$$
$$f_{1t} = f_{2t} = 0 \hbox{  for } s\in (t,\infty).$$

Let $2<p\leq n$ and $r=\dfrac{p}{2}$. It follows from H\"older's inequality \eqref{HolderWM} and the continuous of the Leray projector $\mathbb{P}$ in $\mathcal{M}_{r,\infty,\lambda}$ (see \cite[Lemma 2.3]{Fe2016}) that
\begin{eqnarray}
\sup_{0<s<t}\norm{\begin{bmatrix}
f_{1t}(\cdot,s)\\
f_{2t}(\cdot,s)
\end{bmatrix}}_{r,\infty,n-p} &=& \sup_{0<s<t}\norm{\begin{bmatrix}
\mathbb{P}(u\otimes v)(\cdot, t-s)\\
(u\xi)(\cdot,t-s)
\end{bmatrix}}_{r,\infty,n-p}\cr
&\leq& \sup_{0<s<t}\norm{\begin{bmatrix}
u(\cdot, s)\\
\varphi(\cdot,s)
\end{bmatrix}}_{p,\infty,n-p} \sup_{0<s<t}\norm{\begin{bmatrix}
v(\cdot, s)\\
\xi(\cdot,s)
\end{bmatrix}}_{p,\infty,n-p}.
\end{eqnarray}
Applying Lemma \ref{LinearEst} with $l=p$, $r=\dfrac{p}{2}$, $\lambda=n-p$, $\tau_{l,\lambda}=\dfrac{n-\lambda}{l}=1$ and $\tau_{r,\lambda}=\dfrac{n-\lambda}{r}=2$ we obtain that
\begin{eqnarray*}
\sup_{t>0} \norm{B\left(\begin{bmatrix}
u\\
\theta
\end{bmatrix}, \begin{bmatrix}
v\\
\xi
\end{bmatrix} \right)}_{p,\infty,n-p} &=& \sup_{t>0} \norm{\mathcal{C}\begin{bmatrix}
f_{1t}\\
f_{2t}
\end{bmatrix}}_{p,\infty,n-p}\cr
&\leq& C\sup_{t>0}\sup_{0<s<t}\norm{\begin{bmatrix}
f_{1t}(\cdot,s)\\
f_{2t}(\cdot,s)
\end{bmatrix}}_{r,\infty,n-p}\cr
&\leq& K \sup_{t>0}\norm{\begin{bmatrix}
u(\cdot, t)\\
\varphi(\cdot,t)
\end{bmatrix}}_{p,\infty,n-p}\sup_{t>0}\norm{\begin{bmatrix}
v(\cdot, t)\\
\xi(\cdot,t)
\end{bmatrix}}_{p,\infty,n-p}.
\end{eqnarray*}
The proof is completed.
\end{proof}
We recall the classical lemma in a generic Banach space, its proof is based on the Banach fixed point theorem (in details see \cite{Ca1996,Fe2006}).
\begin{lemma}\label{FixedPoint}
Let $X$ be a Banach space with norm $\norm{\cdot}_X$, $T:X\to X$ be a linear continuous map with norm $\norm{T}\leq \tau<1$ and $B:X\times X \to X$ be a continuous bilinear map, that is, there is a constant $K>0$ such that for all $x_1, x_2\in X$ we have
$$\norm{B(x_1,x_2)}_X \leq K \norm{x_1}_X\norm{x_2}_X.$$
Then, if $0<\varepsilon < \dfrac{(1-\tau)^2}{4K}$ and for any vector $y\in X,\, y\neq 0$ such that $\norm{y}_X<\varepsilon$, there is a solution $x\in X$ for the equation $x=y + B(x,x) + T(x)$ such that 
$$\norm{x}_X \leq \dfrac{2\varepsilon}{1-\tau}.$$
The solution $x$ is unique in the ball $\bar{B}\left( 0,\dfrac{2\varepsilon}{1-\tau} \right)$. Moreover, the solution depends continuously on $y$ in the following sense: if $\norm{widetilde{y}}_X \leq \varepsilon, \, \widetilde{x} = \widetilde{y} + B(\widetilde{x},\widetilde{x}) + T(\widetilde{x})$ and $\norm{\widetilde{x}}_X < \dfrac{2\varepsilon}{1-\tau}$, then
$$\norm{x-\widetilde{x}}_X \leq \frac{1-\tau}{(1-\tau)^2-4K\varepsilon}\norm{y-\widetilde{y}}_X.$$
\end{lemma}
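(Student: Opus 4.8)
The final statement to prove is Lemma~\ref{FixedPoint}, the abstract fixed-point lemma.

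I would prove this by a direct application of the Banach contraction mapping principle on a closed ball of the Banach space $X$. The plan is as follows. Fix $\varepsilon$ with $0<\varepsilon<\frac{(1-\tau)^2}{4K}$, fix $y\in X$ with $\norm{y}_X<\varepsilon$, and set $R:=\frac{2\varepsilon}{1-\tau}$. First I would define the map $\Phi:X\to X$ by $\Phi(x)=y+B(x,x)+T(x)$ and verify that $\Phi$ maps the closed ball $\bar{B}(0,R)$ into itself. Indeed, for $\norm{x}_X\le R$ one estimates $\norm{\Phi(x)}_X\le \norm{y}_X+K\norm{x}_X^2+\tau\norm{x}_X\le \varepsilon+KR^2+\tau R$, and using the definition of $R$ together with the bound on $\varepsilon$ one checks that $\varepsilon+KR^2+\tau R\le R$; this reduces to the inequality $4K\varepsilon\le(1-\tau)^2$, which holds by hypothesis.

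Next I would show $\Phi$ is a contraction on $\bar{B}(0,R)$. For $x_1,x_2\in\bar{B}(0,R)$, using bilinearity $B(x_1,x_1)-B(x_2,x_2)=B(x_1-x_2,x_1)+B(x_2,x_1-x_2)$, so $\norm{\Phi(x_1)-\Phi(x_2)}_X\le (2KR+\tau)\norm{x_1-x_2}_X$. The contraction constant $2KR+\tau=\frac{4K\varepsilon}{1-\tau}+\tau$ is strictly less than $1$ precisely because $4K\varepsilon<(1-\tau)^2$. Hence by the Banach fixed point theorem there is a unique $x\in\bar{B}(0,R)$ with $x=\Phi(x)$, which gives the asserted solution with $\norm{x}_X\le R=\frac{2\varepsilon}{1-\tau}$, and uniqueness in that ball.

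Finally, for the continuous dependence, given $\widetilde y$ with $\norm{\widetilde y}_X\le\varepsilon$ and the corresponding solution $\widetilde x$ with $\norm{\widetilde x}_X<R$, I would subtract the two fixed-point identities and estimate as above: $\norm{x-\widetilde x}_X\le \norm{y-\widetilde y}_X+(2KR+\tau)\norm{x-\widetilde x}_X$, so that $(1-\tau-2KR)\norm{x-\widetilde x}_X\le\norm{y-\widetilde y}_X$; substituting $R=\frac{2\varepsilon}{1-\tau}$ gives $1-\tau-2KR=\frac{(1-\tau)^2-4K\varepsilon}{1-\tau}$, whence $\norm{x-\widetilde x}_X\le\frac{1-\tau}{(1-\tau)^2-4K\varepsilon}\norm{y-\widetilde y}_X$, as claimed. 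There is no serious obstacle here; the only point requiring care is the bookkeeping that ties the radius $R=\frac{2\varepsilon}{1-\tau}$ to the self-mapping property and to the contraction constant $2KR+\tau<1$, both of which hinge on the single inequality $4K\varepsilon<(1-\tau)^2$.
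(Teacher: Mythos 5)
Your proposal is correct: the self-mapping bound, the contraction constant $2KR+\tau<1$, and the Lipschitz-dependence estimate all follow exactly from the single inequality $4K\varepsilon<(1-\tau)^2$, and the constants match the statement. The paper itself does not reprove this lemma but defers to the references, noting only that the proof rests on the Banach fixed point theorem, which is precisely the argument you give, so your approach coincides with the intended one.
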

Using the bilinear estimate obtained in Theorem \ref{Bestimate} and Lemma \ref{FixedPoint} we state and prove the existence of global small solution for \eqref{AbstractEquation} in the following theorem.
\begin{theorem}\label{wellposed}(Gloabl-in-time well-posedness of \eqref{AbstractEquation} in $H_{p,\infty}$). Let $n\geqslant 3$, $2<p\leq n$ and $p\leq \min \left\{ n,\, 2b\right\}$. We suppose that $\begin{bmatrix}
\bar{u}\\
\bar{\theta}
\end{bmatrix} \in \mathcal{M}^\sigma_{p,\infty,\lambda}\times \mathcal{M}_{p,\infty,\lambda}$, where $\lambda=n-p$. If $\norm{\begin{bmatrix}
u_0\\
\theta_0
\end{bmatrix}}_{p,\infty,\lambda}$, $\norm{\begin{bmatrix}
\bar{u}\\
\bar{\theta}
\end{bmatrix}}_{p,\infty,\lambda}$ and $\sup\limits_{t>0}t^{1-\frac{p}{2b}}\norm{g(\cdot,t)}_{b,\infty,\lambda}$ are small enough, then there exists a unique solution $\begin{bmatrix}
\hat u\\
\hat\theta
\end{bmatrix}$ of the Cauchy problem \eqref{AbstractEquation} in a small ball of $H_{p,\infty}$.
Here, we denote $\norm{\begin{bmatrix}
v\\
\xi
\end{bmatrix}}_{p,\infty,\lambda}:=\norm{\begin{bmatrix}
v\\
\xi
\end{bmatrix}}_{\mathcal{M}^\sigma_{p,\infty,\lambda}\times \mathcal{M}_{p,\infty,\lambda}}$.
\end{theorem}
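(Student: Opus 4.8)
The plan is to reduce \eqref{mildsol} to the abstract equation $x = y + B(x,x) + T(x)$ on the Banach space $X := H_{p,\infty}$ and to invoke Lemma \ref{FixedPoint}. Here $x = \begin{bmatrix} u\\ \theta\end{bmatrix}$, the data term is $y := e^{-tL}\begin{bmatrix} u_0\\ \theta_0\end{bmatrix}$, the bilinear map $B$ is the one in \eqref{Bilinear}, and $T(x) := \mathcal{T}(x) + T_g(\theta)$ merges the coupling operator and the gravitational operator into a single continuous linear map on $X$. With this set-up it suffices to bound $\varepsilon := \norm{y}_X$ and $\tau := \norm{T}_{X\to X}$, since the bilinear constant $K$ is already furnished by Theorem \ref{Bestimate}; the three smallness hypotheses in the statement are exactly what is needed to make $\tau<1$ and $0<\varepsilon<(1-\tau)^2/(4K)$, and then Lemma \ref{FixedPoint} produces a unique solution $\begin{bmatrix}\hat u\\ \hat\theta\end{bmatrix}$ in the ball $\bar B(0,\tfrac{2\varepsilon}{1-\tau})\subset X$. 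Its first component stays divergence free because $e^{-tL}$ acts diagonally as $e^{t\Delta}$, commutes with $\mathbb P$, and the range of $\mathbb P$ consists of divergence-free fields.

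First I would dispatch the two easy pieces. For the data term, Lemma \ref{Disp} applied to each component of $e^{-tL}=\operatorname{diag}(e^{t\Delta},e^{t\Delta})$ with $m=0$ and identical source and target indices $(p,\infty,n-p)$ gives a vanishing power of $t$, hence $\sup_{t>0}\norm{y(t)}_{p,\infty,n-p}\le C\norm{\begin{bmatrix} u_0\\ \theta_0\end{bmatrix}}_{p,\infty,\lambda}=:\varepsilon$. For the coupling operator, the identities $\theta\bar u=\bar u\theta$ and $\bar\theta u=u\bar\theta$ show that $\mathcal{T}(x)=B(x,\bar w)+B(\bar w,x)$ with $\bar w:=\begin{bmatrix}\bar u\\ \bar\theta\end{bmatrix}$, a time-independent element of $X$ by the hypothesis on the steady solution; Theorem \ref{Bestimate} then immediately gives $\norm{\mathcal{T}(x)}_{H_{p,\infty}}\le 2K\norm{\bar w}_{p,\infty,\lambda}\norm{x}_{H_{p,\infty}}$, so $\norm{\mathcal{T}}_{X\to X}\le 2K\norm{\bar w}_{p,\infty,\lambda}$.

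The one genuinely new estimate is for the gravitational term $T_g$. Set $\tfrac1c=\tfrac1p+\tfrac1b$; the assumptions on $p$ and $b$ guarantee $1<c<\infty$ and $0<\tfrac{p}{2b}<1$. The Morrey--Lorentz Hölder inequality \eqref{HolderWM} (equal weights $n-p$) together with the continuity of $\mathbb P$ on Morrey--Lorentz spaces bounds $\norm{\mathbb P(\theta(\cdot,s)g(\cdot,s))}_{c,\infty,n-p}$ by $C\norm{\theta(\cdot,s)}_{p,\infty,n-p}\norm{g(\cdot,s)}_{b,\infty,n-p}$, and Lemma \ref{Disp} with $m=0$, source index $(c,\infty,n-p)$ and target index $(p,\infty,n-p)$ costs a factor $(t-s)^{-p/(2b)}$ because $\tau_{c,n-p}-\tau_{p,n-p}=\tfrac pb$. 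Using $\norm{g(\cdot,s)}_{b,\infty,n-p}\le M\,s^{-(1-p/(2b))}$, where $M:=\sup_{s>0}s^{1-p/(2b)}\norm{g(\cdot,s)}_{b,\infty,n-p}$ is the quantity assumed small, I obtain
\begin{equation*}
\norm{T_g(\theta)(t)}_{p,\infty,n-p}\le C\kappa M\Bigl(\sup_{s>0}\norm{\theta(\cdot,s)}_{p,\infty,n-p}\Bigr)\int_0^t (t-s)^{-p/(2b)}\,s^{-(1-p/(2b))}\,ds ,
\end{equation*}
and the last integral equals the finite, $t$-independent Euler beta value $B\!\bigl(\tfrac{p}{2b},\,1-\tfrac{p}{2b}\bigr)$; hence $\norm{T_g}_{X\to X}\le C\kappa M$. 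Combining, $\tau\le 2K\norm{\bar w}_{p,\infty,\lambda}+C\kappa M<1$ when the two norms are small, and $\varepsilon<(1-\tau)^2/(4K)$ when the data is small, so Lemma \ref{FixedPoint} closes the argument.

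I expect the main obstacle to be precisely the scaling bookkeeping in $T_g$: the time weight $1-\tfrac{p}{2b}$ attached to $g$ is the unique exponent for which the two powers in the convolution integral above sum to $-1$, so that $T_g$ is bounded (and only bounded, with no decay) on $H_{p,\infty}$; this is also why the condition $p\le 2b$, i.e.\ $\tfrac{p}{2b}<1$, is essential -- otherwise the beta integral diverges. A second, routine technical point is that one must verify that $y$, $\mathcal{T}(x)$ and $T_g(\theta)$ are strongly continuous in $t$, so that they lie in $BC(\mathbb{R}_+,\cdot)$ and not merely in $L^\infty$; this follows from the boundedness estimates above by a standard approximation of the data.
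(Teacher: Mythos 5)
Your proposal is correct and follows essentially the same route as the paper: estimate $T_g$ via the Morrey--Lorentz H\"older inequality, continuity of $\mathbb P$, the dispersive estimate of Lemma \ref{Disp} with $\tfrac1d=\tfrac1p+\tfrac1b$, and the convergent beta-type time integral (which the paper evaluates by splitting at $1/2$); absorb $\mathcal T$ via the bilinear estimate of Theorem \ref{Bestimate} applied with the steady state as one argument; then treat $\mathbb T=\mathcal T+T_g$ as the small linear perturbation and invoke Lemma \ref{FixedPoint} on the Duhamel equation. Your added remarks (the trivial semigroup bound on the data term, time continuity, preservation of the divergence-free condition, and the observation that $\tfrac{p}{2b}<1$ is what makes the beta integral finite) are consistent with, and slightly more explicit than, the paper's argument.
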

\begin{proof}
First, by using \eqref{disp} with $\dfrac{1}{d} = \dfrac{1}{p}+\dfrac{1}{b}$ (where $\dfrac{p}{2b}<1$), H\"older's inequality \eqref{HolderWM} and the continuous of Leray projector $\mathbb{P}$ we estimate the linear-coupling part as follows
\begin{eqnarray}\label{TEst}
\norm{T_g(\theta)}_{p,\infty,\lambda}&\leq& \int_0^t \norm{e^{-(t-s)L}\begin{bmatrix}\kappa\mathbb{P}(\theta g)\\
0
\end{bmatrix}(s)}_{p,\infty,\lambda} ds\cr
&\leq& \kappa\int_0^t (t-s)^{-\frac{1}{2}\left( \frac{p}{d} -1 \right)}\norm{\begin{bmatrix}\theta g\\
0
\end{bmatrix}(s)}_{d,\infty,\lambda} ds\cr
&\leq& \kappa\int_0^t (t-s)^{-\frac{p}{2b}}\norm{g(s)}_{b,\infty,\lambda}\norm{\theta(s)}_{p,\infty,n-p} ds\cr
&\leq& \kappa \sup_{t>0}t^{1-\frac{p}{2b}}\norm{g(t)}_{b,\infty,\lambda}\sup_{t>0}\norm{\theta}_{p,\infty,n-p} \int_0^t (t-s)^{-\frac{p}{2b}} s^{-1+\frac{p}{2b}} ds\cr
&\leq& \kappa \sup_{t>0}t^{1-\frac{p}{2b}}\norm{g(t)}_{b,\infty,\lambda}\sup_{t>0}\norm{\theta}_{p,\infty,n-p} \int_0^1 (1-z)^{-\frac{p}{2b}} z^{-1+\frac{p}{2b}} dz \cr
&\leq& \kappa \sup_{t>0}t^{1-\frac{p}{2b}}\norm{g(t)}_{b,\infty,\lambda}\sup_{t>0}\norm{\theta}_{p,\infty,n-p} \cr
&&\times \left( \int_0^{1/2} (1-z)^{-\frac{p}{2b}} z^{-1+\frac{p}{2b}} dz + \int_{1/2}^1 (1-z)^{-\frac{p}{2b}} z^{-1+\frac{p}{2b}} dz \right) \cr
&\leq& \kappa \sup_{t>0}t^{1-\frac{p}{2b}}\norm{g(t)}_{b,\infty,\lambda}\sup_{t>0}\norm{\theta}_{p,\infty,n-p} \left( \frac{2^{1+\frac{p}{b}}b}{p} + \frac{1}{1-\frac{p}{2b}} \right)\cr
&\leq& \kappa M\sup_{t>0}t^{1-\frac{p}{2b}}\norm{g(t)}_{b,\infty,\lambda} \norm{\begin{bmatrix}
u\\
\theta
\end{bmatrix}}_{H_{p,\infty}},
\end{eqnarray}
where $M:=\dfrac{2^{1+\frac{p}{b}}b}{p} + \dfrac{1}{1-\frac{p}{2b}}$. 
Moreover, by using Theorem \ref{Bestimate} we have that 
\begin{equation}\label{Testimate}
\norm{\mathcal{T} \left(\begin{bmatrix}
u\\
\theta
\end{bmatrix} \right)}_{H_{p,\infty}} \leq 2K \norm{\begin{bmatrix}
\bar{u}\\
\bar{\theta}
\end{bmatrix}}_{H_{p,\infty}} \norm{\begin{bmatrix}
u\\
\theta
\end{bmatrix}}_{H_{p,\infty}} 
\end{equation}
We set the linear operator
$$\mathbb{T}\left(\begin{bmatrix}
u\\
\theta
\end{bmatrix} \right) = \mathcal{T}\left(\begin{bmatrix}
u\\
\theta
\end{bmatrix} \right) + T_g(\theta).$$
Using \eqref{TEst} and \eqref{Testimate} we have that
\begin{equation}\label{T}
\norm{\mathbb{T}\left(\begin{bmatrix}
u\\
\theta
\end{bmatrix} \right)}_{H_{p,\infty}} \leq \left( 2K\begin{bmatrix}
\bar{u}\\
\bar{\theta}
\end{bmatrix}_{H_{p,\infty}} + \kappa M\sup_{t>0}t^{1-\frac{p}{2b}}\norm{g(t)}_{b,\infty,\lambda} \right)\norm{\begin{bmatrix}
u\\
\theta
\end{bmatrix}}_{H_{p,\infty}}.
\end{equation}

Using \eqref{T}, Theorem \ref{Bestimate} and Lemma \ref{FixedPoint} for equation \eqref{AbstractEquation} which is equivalent with 
\begin{equation*}
\begin{bmatrix}
u\\
\theta
\end{bmatrix}(t)= e^{-(t-s)L}\begin{bmatrix}
u_0\\
\theta_0
\end{bmatrix} + B\left( \begin{bmatrix}
u\\
\theta
\end{bmatrix}, \begin{bmatrix}
u\\
\theta
\end{bmatrix} \right) (t) + \mathbb{T}\left( \begin{bmatrix}
u\\
\theta
\end{bmatrix} \right)(t),
\end{equation*}
we have that: there are constants $C_n, K_1, \delta, \eta$ such that if 
$$\norm{\begin{bmatrix}
u_0\\
\theta_0
\end{bmatrix}}_{H_{p,\infty}} < \delta \hbox{  and  } 2K\begin{bmatrix}
\bar{u}\\
\bar{\theta}
\end{bmatrix}_{H_{p,\infty}} + \kappa M\sup_{t>0}t^{1-\frac{p}{2b}}\norm{g(t)}_{b,\infty,\lambda}< \eta,$$
then \eqref{AbstractEquation} has a global mild solution $\begin{bmatrix}
u\\
\theta
\end{bmatrix} \in H_{p,\infty}$ satisfying that
$$\norm{\begin{bmatrix}
u\\
\theta
\end{bmatrix}}_{H_{p,\infty}} \leq \frac{2C_n\delta}{1-K_1\eta}.$$
\end{proof}

\subsection{Global well-posedness in $H_{q,r,\infty}$}\label{Hqr}
We consider the following Cartesian product of weak-Morrey space
$$\mathcal{M}^\sigma_{q,\infty,\mu}(\mathbb{R}^n)\times \mathcal{M}_{r,\infty,\nu}(\mathbb{R}^n)$$
endowned with the norm
$$\norm{\begin{bmatrix} u\\ 
\theta
\end{bmatrix}}_{q,\infty,\mu;r,\infty,\nu}: = \norm{u}_{q,\infty,\mu} + \norm{\theta}_{r,\infty,\nu}.$$
In order to establish the asymptotic stability in the next section
we need to prove the existence of global small solution of \eqref{AbstractEquation}
in the following time-dependent functional space
$$H_{q,r,\infty}: = \left\{ \begin{bmatrix}
u\cr \theta
\end{bmatrix} \in H_{p,\infty} : \sup_{t>0}\norm{\begin{bmatrix}
t^{\alpha/2}u\cr t^{\beta/2}\theta
\end{bmatrix}}_{q,\infty,\lambda;r,\infty,\lambda} <\infty  \right\},\hbox{  where  } \lambda=n-p,$$
endowed with the norm
$$\norm{ \begin{bmatrix} u\\ 
\theta
\end{bmatrix} }_{H_{q,r,\infty}}: = \norm{\begin{bmatrix}
u\cr \theta
\end{bmatrix}}_{H_{p,\infty}} + \sup_{t>0}\norm{\begin{bmatrix}
t^{\alpha/2}u\cr t^{\beta/2}\theta
\end{bmatrix}}_{q,\infty,\lambda;r,\infty,\lambda},$$
where $\alpha = 1-\dfrac{p}{q}$ and $\beta = 1-\dfrac{p}{r}$.
 
We can extend the bilinear estimate obtained in Theorem \ref{Bestimate} to $H_{q,r,\infty}$ as follows:
\begin{theorem}\label{BBBestimate}(Bilinear estimate in $H_{q,r,\infty}$)
Let $n\geqslant 3$ and $1<p<q<r<\infty$. Let $B(\cdot,\cdot)$ be the bilinear form \eqref{Bilinear}.
\begin{enumerate}
\item[(i)] There exists a constant $\widetilde{K} > 0$ such that
\begin{equation}\label{BBestimate}
\norm{B\left( \begin{bmatrix}
u\\
\theta
\end{bmatrix}, \begin{bmatrix}
v\\
\xi
\end{bmatrix} \right)}_{H_{q,r,\infty}} \leq \widetilde{K} \norm{\begin{bmatrix}
u\\
\theta
\end{bmatrix}}_{H_{q,r,\infty}}\norm{\begin{bmatrix}
v\\
\xi
\end{bmatrix}}_{H_{q,r,\infty}}
\end{equation} 
for all $\begin{bmatrix}
u\\
\theta
\end{bmatrix}, \begin{bmatrix}
v\\
\xi
\end{bmatrix} \in H_{q,r,\infty}$.

\item[(ii)] There exists a constant $\widehat{K} > 0$ such that
\begin{equation}\label{BBBBestimate}
\norm{B\left( \begin{bmatrix}
u\\
\theta
\end{bmatrix}, \begin{bmatrix}
v\\
\xi
\end{bmatrix} \right)}_{H_{q,r,\infty}} \leq \widehat{K} \norm{\begin{bmatrix}
u\\
\theta
\end{bmatrix}}_{H_{p,\infty}}\norm{\begin{bmatrix}
v\\
\xi
\end{bmatrix}}_{H_{q,r,\infty}}
\end{equation} 
for all $\begin{bmatrix}
u\\
\theta
\end{bmatrix} \in H_{p,\infty}, \begin{bmatrix}
v\\
\xi
\end{bmatrix} \in H_{q,r,\infty}$.

\end{enumerate}

\end{theorem}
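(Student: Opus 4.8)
The plan is to control the two constituents of the $H_{q,r,\infty}$-norm of $B(\cdot,\cdot)$ separately. The ambient $H_{p,\infty}$-part is free: Theorem \ref{Bestimate} gives $\norm{B(X,Y)}_{H_{p,\infty}}\le K\norm{X}_{H_{p,\infty}}\norm{Y}_{H_{p,\infty}}$, and since $\norm{\cdot}_{H_{p,\infty}}\le\norm{\cdot}_{H_{q,r,\infty}}$ this already yields the $H_{p,\infty}$-component of both \eqref{BBestimate} and \eqref{BBBBestimate}. So everything reduces to estimating $\sup_{t>0}t^{\alpha/2}\norm{B_1(t)}_{q,\infty,\lambda}$ and $\sup_{t>0}t^{\beta/2}\norm{B_2(t)}_{r,\infty,\lambda}$, where $B_1(t)=-\int_0^t\nabla_x e^{(t-s)\Delta}\mathbb{P}(u\otimes v)(s)\,ds$ and $B_2(t)=-\int_0^t\nabla_x e^{(t-s)\Delta}(u\xi)(s)\,ds$ are the velocity- and temperature-components of $B(X,Y)$. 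Throughout, the exponent arithmetic uses $\lambda=n-p$, so that $\tau_{m,\lambda}=p/m$ and the base space has scaling exponent $\tau_{p,\lambda}=1$; the standing restriction $2<p$ inherited from the construction of $H_{p,\infty}$ keeps every intermediate Hölder index strictly above $1$.

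\textit{Part (i).} Both arguments lie in $H_{q,r,\infty}$, hence $\norm{u(s)}_{q,\infty,\lambda}+\norm{v(s)}_{q,\infty,\lambda}\lesssim s^{-\alpha/2}$ and $\norm{\xi(s)}_{r,\infty,\lambda}\lesssim s^{-\beta/2}$, the implied constants being the $H_{q,r,\infty}$-norms. Hölder's inequality \eqref{HolderWM} together with the continuity of $\mathbb{P}$ on Morrey--Lorentz spaces gives $\mathbb{P}(u\otimes v)(s)\in\mathcal{M}_{q/2,\infty,\lambda}$ with size $\lesssim s^{-\alpha}$ and $(u\xi)(s)\in\mathcal{M}_{m,\infty,\lambda}$, $\tfrac1m=\tfrac1q+\tfrac1r$, with size $\lesssim s^{-(\alpha+\beta)/2}$. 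Feeding these into the smoothing estimate \eqref{disp} (source index $q/2$, target index $q$ for $B_1$; source index $m$, target index $r$ for $B_2$; in both cases the $\tau$-gap equals $p/q$, producing the kernel $(t-s)^{-1/2-p/(2q)}$) reduces matters to the integrals $\int_0^t(t-s)^{-1/2-p/(2q)}s^{-\alpha}\,ds$ and $\int_0^t(t-s)^{-1/2-p/(2q)}s^{-(\alpha+\beta)/2}\,ds$. The substitution $s=tz$ turns each into $t^{-\alpha/2}$, respectively $t^{-\beta/2}$, times a finite Beta integral --- convergent because $\alpha<1$ and $(\alpha+\beta)/2<1$ at $s=0$ and because $p<q$ forces $\tfrac12+\tfrac{p}{2q}<1$ at $s=t$ --- and this cancels the prefactors $t^{\alpha/2}$, $t^{\beta/2}$ exactly. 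This proves \eqref{BBestimate}.

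\textit{Part (ii).} Now only $[v,\xi]$ carries a time weight, while $[u,\theta]$ is merely bounded in $\mathcal{M}_{p,\infty,\lambda}$. Hölder gives $\mathbb{P}(u\otimes v)(s)\in\mathcal{M}_{d,\infty,\lambda}$ with $\tfrac1d=\tfrac1p+\tfrac1q$ and size $\lesssim s^{-\alpha/2}$, and $(u\xi)(s)\in\mathcal{M}_{e,\infty,\lambda}$ with $\tfrac1e=\tfrac1p+\tfrac1r$ and size $\lesssim s^{-\beta/2}$; here $\tau_{d,\lambda}-\tau_{q,\lambda}=\tau_{e,\lambda}-\tau_{r,\lambda}=1$, so the crude use of \eqref{disp} produces the \emph{non-integrable} kernel $(t-s)^{-1}$. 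The fix is to split $\int_0^t=\int_0^{t/2}+\int_{t/2}^t$. On $(0,t/2)$ one has $t-s\simeq t$, so the $(t-s)^{-1}$ loss is harmless and $\int_0^{t/2}s^{-\alpha/2}\,ds\simeq t^{1-\alpha/2}$ (using $\alpha/2<1$) yields exactly $t^{-\alpha/2}$ after multiplication by $t^{-1}$; the same computation with $\beta$ handles the $B_2$-contribution. On $(t/2,t)$ one has $s\simeq t$, so $\norm{v(s)}_{q,\infty,\lambda}\lesssim t^{-\alpha/2}$ is essentially constant; after the substitution $\sigma=t-s$ the operator $\int_{t/2}^t\nabla_x e^{(t-s)\Delta}\mathbb{P}(u\otimes v)(s)\,ds$ becomes $\mathcal{C}$ applied to a time-function supported in $(0,t/2)$, and since $\tau_{d,\lambda}-\tau_{q,\lambda}=1$ Lemma \ref{LinearEst} (with target index $q$ and source index $d$) bounds its $\mathcal{M}_{q,\infty,\lambda}$-norm by $C\sup_{s\in(t/2,t)}\norm{\mathbb{P}(u\otimes v)(s)}_{d,\infty,\lambda}\lesssim t^{-\alpha/2}\norm{X}_{H_{p,\infty}}\norm{Y}_{H_{q,r,\infty}}$; the $B_2$-piece is identical with $(q,d,\alpha)$ replaced by $(r,e,\beta)$. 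Multiplying through by $t^{\alpha/2}$, resp.\ $t^{\beta/2}$, gives \eqref{BBBBestimate}. (Incidentally \eqref{BBestimate} also follows a posteriori from \eqref{BBBBestimate}, because $\norm{\cdot}_{H_{p,\infty}}\le\norm{\cdot}_{H_{q,r,\infty}}$, so part (ii) is the substantive claim.)

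The main obstacle is exactly this near-diagonal part of (ii): the missing time weight on one factor forces the $\tau$-gap to be precisely $1$, so the Morrey--Lorentz smoothing estimate \eqref{disp} is not time-integrable at $s=t$, and one genuinely needs the duality/Yamazaki-type refinement of Lemma \ref{LinearEst} (ultimately Lemma \ref{YamazakiType}) applied on the truncated interval $(t/2,t)$. A secondary bookkeeping task is checking that the intermediate indices $q/2$, $d$, $e$, $m$ all lie in the admissible ranges of \eqref{HolderWM} and \eqref{disp} and that every Beta integral converges, which is where the hypotheses $2<p<q<r<\infty$ enter.
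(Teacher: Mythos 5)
Your proposal is correct, and for part (i) it is the paper's argument verbatim: H\"older \eqref{HolderWM} into $\mathcal{M}_{q/2,\infty,\lambda}$ and $\mathcal{M}_{m,\infty,\lambda}$ with $\tfrac1m=\tfrac1q+\tfrac1r$, the smoothing estimate \eqref{disp} with the kernel $(t-s)^{-\frac12-\frac{p}{2q}}$, Beta integrals after $s=tz$, and Theorem \ref{Bestimate} for the $H_{p,\infty}$-component. For part (ii) you use the same key tool as the paper (the duality/Yamazaki-type linear estimate of Lemma \ref{LinearEst} with source indices $\tfrac{pq}{p+q}$ and $\tfrac{pr}{p+r}$, where the $\tau$-gap is exactly $1$), but you add a genuine refinement: the paper applies \eqref{linearEs} in one stroke and slips the weight $t^{\alpha/2}$ inside the supremum, which is not what the unweighted lemma literally gives (the product norm only decays like $s^{-\alpha/2}$, so $t^{\alpha/2}\sup_{0<s<t}\norm{\mathbb{P}(u\otimes v)(s)}_{\frac{pq}{p+q},\infty,\lambda}$ is not finite as written), i.e.\ it tacitly invokes a time-weighted variant. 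Your splitting $\int_0^{t/2}+\int_{t/2}^t$ supplies exactly the missing justification: on $(0,t/2)$ the non-integrable kernel $(t-s)^{-1}$ from \eqref{disp} is tamed by $t-s\simeq t$, and on $(t/2,t)$ the factor $s^{-\alpha/2}\simeq t^{-\alpha/2}$ is essentially constant so Lemma \ref{LinearEst} applies on the truncated interval; this buys a fully rigorous weighted bound from the unweighted lemma, at the modest cost of a two-piece bookkeeping. Your closing observation that \eqref{BBestimate} follows a posteriori from \eqref{BBBBestimate} via $\norm{\cdot}_{H_{p,\infty}}\leq\norm{\cdot}_{H_{q,r,\infty}}$ is also correct and would shorten the paper's proof of (i); like the paper, though, your argument really uses $2<p<q$ (so that $q/2$, $\tfrac{pq}{p+q}$, $\tfrac{pr}{p+r}>1$ and Theorem \ref{Bestimate} is applicable), which is stronger than the stated hypothesis $1<p$, and you rightly flag this.
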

\begin{proof}
We have that
\begin{eqnarray}
B\left(\begin{bmatrix}
u\\
\theta
\end{bmatrix}, \begin{bmatrix}
v\\
\xi
\end{bmatrix} \right)(t): &=& -\int_0^t \nabla_x e^{-(t-s)L}\begin{bmatrix}\mathbb{P}(u\otimes v)\\
u\xi
\end{bmatrix}(s)ds \cr
&=& -\int_0^t \begin{bmatrix}
\nabla_x e^{(t-s)\Delta}\mathbb{P}(u\otimes v)\\
\nabla_x e^{(t-s)\Delta}(u \xi)
\end{bmatrix}(s)ds\cr
&=& \begin{bmatrix}
B_1(u,v)\\
B_2(u,\xi),
\end{bmatrix}
\end{eqnarray}
where
\begin{equation}
B_1(u,v):= -\int_0^t \nabla_x e^{(t-s)\Delta}\mathbb{P}(u\otimes v)(s)ds,\,\, B_2(u,\xi):= \int_0^t \nabla_xe^{(t-s)\Delta}(u \xi)(s)ds.
\end{equation}

\underline{Assertion $(i)$.}
Let $\dfrac{1}{d}=\dfrac{1}{q}+\dfrac{1}{r}$. Setting $\lambda=n-p$, by using Lemma  \ref{Disp} and the continuous of Leray projector $\mathbb{P}$ we estimate that
\begin{eqnarray}\label{ine1}
&&\norm{\begin{bmatrix}
t^{\frac{\alpha}{2}}B_1(u,v)\\
t^{\frac{\beta}{2}}B_2(u,\xi),
\end{bmatrix}}_{q,\infty, \lambda; r,\infty, \lambda} \leq C\int_0^t \norm{\begin{bmatrix}
t^{\frac{\alpha}{2}}(t-s)^{-\frac{1}{2}-\frac{p}{2q}}(u\otimes v)\\
t^{\frac{\beta}{2}}(t-s)^{-\frac{1}{2}-\frac{p}{2q}}(u \xi)
\end{bmatrix}(s)}_{\frac{q}{2},\infty,\lambda;d,\infty,\lambda}  ds\cr
&\leq& C\int_0^t \left( t^{\frac{\alpha}{2}}(t-s)^{-\frac{1}{2}-\frac{p}{2q}}\norm{(u\otimes v)(s)}_{\frac{q}{2},\infty,\lambda} +  t^{\frac{\beta}{2}}(t-s)^{-\frac{1}{2}-\frac{p}{2q}}\norm{(u \xi)(s)}_{d,\infty,\lambda} \right) ds\cr
&:=& C(J_1 + J_2),
\end{eqnarray}
where
\begin{eqnarray*}
&&J_1= \int_0^t t^{\frac{\alpha}{2}}(t-s)^{-\frac{1}{2}-\frac{p}{2q}}\norm{(u\otimes v)(s)}_{\frac{q}{2},\infty,\lambda} ds\cr
&&J_2= \int_0^t t^{\frac{\beta}{2}}(t-s)^{-\frac{1}{2}-\frac{p}{2q}}\norm{(u \xi)(s)}_{d,\infty,\lambda} ds.
\end{eqnarray*}

By using H\"older inequality in Lemma \ref{HolderWM} we can estimate $J_1$ and $J_2$ as follows
\begin{eqnarray}\label{ine2}
J_1 &\leq&  \int_0^t t^{\frac{\alpha}{2}}(t-s)^{-\frac{1}{2}-\frac{p}{2q}}\norm{u(s)}_{q,\infty,\lambda}\norm{v(s)}_{q,\infty,\lambda} ds\cr
&\leq& t^{\frac{\alpha}{2}}\int_0^t(t-s)^{-\frac{1}{2}-\frac{p}{2q}}s^{-\alpha} ds \sup_{t>0}t^{\frac{\alpha}{2}}\norm{u(t)}_{q,\infty,\lambda}\sup_{t>0}t^{\frac{\alpha}{2}}\norm{v(t)}_{q,\infty,\lambda} \cr
&\leq& t^{\frac{1}{2}-\frac{p}{2q}-\frac{\alpha}{2}}\int_0^1 (1-s)^{-\frac{1}{2}-\frac{p}{2q}}s^{-\alpha} ds \sup_{t>0}t^{\frac{\alpha}{2}}\norm{u(t)}_{q,\infty,\lambda}\sup_{t>0}t^{\frac{\alpha}{2}}\norm{v(t)}_{q,\infty,\lambda}\cr
&\leq& \int_0^1 (1-s)^{-\frac{1}{2}-\frac{p}{2q}}s^{-\alpha} ds \sup_{t>0}t^{\frac{\alpha}{2}}\norm{u(t)}_{q,\infty,\lambda}\sup_{t>0}t^{\frac{\alpha}{2}}\norm{v(t)}_{q,\infty,\lambda} \cr
&\leq& \left( \int_0^{1/2} (1-s)^{-\frac{1}{2}-\frac{p}{2q}}s^{-\alpha} ds + \int_{1/2}^1 (1-s)^{-\frac{1}{2}-\frac{p}{2q}}s^{-\alpha} ds  \right)\cr
&&\times \sup_{t>0} \norm{\begin{bmatrix}
t^{\frac{\alpha}{2}}u(t)\\
t^{\frac{\beta}{2}}\theta(t),
\end{bmatrix}}_{q,\infty,\lambda;r,\infty,\lambda} 
\sup_{t>0} \norm{\begin{bmatrix}
t^{\frac{\alpha}{2}}v(t)\\
t^{\frac{\beta}{2}}\xi(t),
\end{bmatrix}}_{q,\infty,\lambda;r,\infty,\lambda}\cr
&\leq& \left( 2^{\frac{1}{2}+\frac{p}{2q}}\int_0^{1/2}s^{-\alpha} ds + 2^{1-\frac{p}{q}}\int_{1/2}^1 (1-s)^{-\frac{1}{2}-\frac{p}{2q}} ds  \right)\cr
&&\times \sup_{t>0} \norm{\begin{bmatrix}
t^{\frac{\alpha}{2}}u(t)\\
t^{\frac{\beta}{2}}\theta(t),
\end{bmatrix}}_{q,\infty,\lambda;r,\infty,\lambda} 
\sup_{t>0} \norm{\begin{bmatrix}
t^{\frac{\alpha}{2}}v(t)\\
t^{\frac{\beta}{2}}\xi(t),
\end{bmatrix}}_{q,\infty ,\lambda;r,\infty ,\lambda}\cr
&\leq& \left( \frac{q 2^{\frac{1}{2}-\frac{p}{2q}}}{p} + \frac{2^{\frac{1}{2}-\frac{p}{2q}}}{\frac{1}{2}-\frac{p}{2q}} \right)\sup_{t>0} \norm{\begin{bmatrix}
t^{\frac{\alpha}{2}}u(t)\\
t^{\frac{\beta}{2}}\theta(t),
\end{bmatrix}}_{q,\infty ,\lambda;r,\infty ,\lambda} 
\sup_{t>0} \norm{\begin{bmatrix}
t^{\frac{\alpha}{2}}v(t)\\
t^{\frac{\beta}{2}}\xi(t),
\end{bmatrix}}_{q,\infty ,\lambda;r,\infty ,\lambda}\cr
&\leq& C_1 \sup_{t>0} \norm{\begin{bmatrix}
t^{\frac{\alpha}{2}}u(t)\\
t^{\frac{\beta}{2}}\theta(t),
\end{bmatrix}}_{q,\infty ,\lambda;r,\infty ,\lambda} 
\sup_{t>0} \norm{\begin{bmatrix}
t^{\frac{\alpha}{2}}v(t)\\
t^{\frac{\beta}{2}}\xi(t),
\end{bmatrix}}_{q,\infty ,\lambda;r,\infty ,\lambda},
\end{eqnarray}
where
$$C_1 = \frac{q 2^{\frac{1}{2}-\frac{p}{2q}}}{p} + \frac{2^{\frac{1}{2}-\frac{p}{2q}}}{\frac{1}{2}-\frac{p}{2q}}.$$
By the same way we can estimate that
\begin{eqnarray}\label{ine3}
J_2 &\leq& \int_0^1 (1-s)^{-\frac{1}{2}-\frac{p}{2q}}s^{-\frac{\alpha+\beta}{2}} ds \sup_{t>0}t^{\frac{\alpha}{2}}\norm{u(t)}_{q,\infty,\lambda}\sup_{t>0}t^{\frac{\beta}{2}}\norm{\xi(t)}_{r,\infty,\lambda} \cr 
&\leq& C_2 \sup_{t>0} \norm{\begin{bmatrix}
t^{\frac{\alpha}{2}}u(t)\\
t^{\frac{\beta}{2}}\theta(t),
\end{bmatrix}}_{q,\infty ,\lambda;r,\infty ,\lambda} 
\sup_{t>0} \norm{\begin{bmatrix}
t^{\frac{\alpha}{2}}v(t)\\
t^{\frac{\beta}{2}}\xi(t),
\end{bmatrix}}_{q,\infty ,\lambda;r,\infty ,\lambda},
\end{eqnarray}
where
$$C_2 = \frac{2^{\frac{1}{2}-\frac{p}{2q}}}{\frac{p}{2q}+\frac{p}{2r}} + \frac{2^{\frac{1}{2}-\frac{p}{2r}}}{\frac{1}{2}-\frac{p}{2q}}.$$
Combining \eqref{ine1}, \eqref{ine2}, \eqref{ine3} and the bilinear estimate \eqref{bestimate} in $H_{p,\infty}$ we obtain the bilinear estimate \eqref{BBestimate} in $H_{q,r,\infty}$.

\underline{Assertion $(ii)$.} 
By applying the linear estimate \eqref{linearEs} in Lemma  \ref{LinearEst} for $r=\dfrac{pq}{p+q}, \, l=q, \, \lambda=n-p$ and the continuous of Leray projector $\mathbb{P}$, we have 
\begin{eqnarray}\label{1}
\norm{t^{\frac{\alpha}{2}}B_1(u,v)}_{q,\infty,\lambda} &=& t^{\frac{\alpha}{2}}\norm{\int_0^t \nabla_x e^{(t-s)\Delta}\begin{bmatrix}\mathbb{P}(u\otimes v)\\0\end{bmatrix}(s)ds}_{q,\infty,\lambda}\cr
&\leq& \widehat{C}\sup_{t>0}\left(t^{\frac{\alpha}{2}}\norm{\begin{bmatrix}(u\otimes v)\\0\end{bmatrix}(\cdot,t)}_{\frac{pq}{p+q},\infty,\lambda} \right)\cr
&\leq& \widehat{C}\sup_{t>0}\left(t^{\frac{\alpha}{2}}\norm{u(\cdot,t)}_{p,\infty,\lambda}\norm{v(\cdot,t)}_{q,\infty,\lambda}\right)\cr
&\leq& \widehat{C}\sup_{t>0}\norm{\begin{bmatrix}u\\\theta\end{bmatrix}(\cdot,t)}_{p,\infty,\lambda}\sup_{t>0} \norm{\begin{bmatrix}t^{\frac{\alpha}{2}}v\\t^{\frac{\beta}{2}}\xi\end{bmatrix}(\cdot,t)}_{q,\infty,\lambda}\cr
&\leq& \widehat{C}\norm{\begin{bmatrix}u\\\theta\end{bmatrix}}_{H_{p,\infty}}\norm{\begin{bmatrix}v\\\xi\end{bmatrix}}_{H_{q,r,\infty}}
\end{eqnarray}
Applying again the linear estimate \eqref{linearEs} in Lemma \ref{LinearEst} for $r=\dfrac{pq}{p+q}, \, l=q, \, \lambda=n-p$, we have also that
\begin{eqnarray}\label{2}
\norm{t^{\frac{\beta}{2}}B_2(v,\xi)}_{r,\infty,\lambda} &=& t^{\frac{\beta}{2}}\norm{\int_0^t \begin{bmatrix}0\\\nabla_x e^{(t-s)\Delta}(u\xi)\end{bmatrix}}_{r,\infty,\lambda}\cr
&\leq& \widehat{C}\sup_{t>0}\left( t^{\frac{\beta}{2}}\norm{\begin{bmatrix}0\\(u\xi)\end{bmatrix}(\cdot,t)}_{\frac{pr}{p+r},\infty,\lambda}\right)\cr
&\leq& \widehat{C}\sup_{t>0}\left(t^{\frac{\beta}{2}}\norm{u(\cdot,t)}_{p,\infty,\lambda}\norm{\xi(\cdot,t)}_{r,\infty,\lambda}\right)\cr
&\leq& \widehat{C}\sup_{t>0}\norm{\begin{bmatrix}u\\\theta\end{bmatrix}(\cdot,t)}_{p,\infty,\lambda}\sup_{t>0}\left( \norm{\begin{bmatrix}t^{\frac{\alpha}{2}}v\\t^{\frac{\beta}{2}}\xi\end{bmatrix}(\cdot,t)}_{r,\infty,\lambda}\right)\cr
&\leq& \widehat{C}\norm{\begin{bmatrix}u\\\theta\end{bmatrix}}_{H_{p,\infty}} \norm{\begin{bmatrix}v\\\xi\end{bmatrix}}_{H_{q,r,\infty}}.
\end{eqnarray}
Combining estimates \eqref{1}, \eqref{2}  and the bilinear estimate in $H_{p,\infty}$ obtained in Theorem \ref{Bestimate}, we obtain the bilinear estimate \eqref{BBBBestimate}.
\end{proof}

Now we state and prove the existence of global small mild solution of \eqref{AbstractEquation} in $H_{q,r,\infty}$.
\begin{theorem}\label{WellposedHqr}(Global-in-time well-posedness of \eqref{AbstractEquation} in $H_{q,r,\infty}$) Let $n\geqslant 3$, $2<p<q<r <\infty$, $r>q':=\dfrac{q}{q-1}$ and $p\leq \min \left\{ n,\, 2b\right\}$, $\dfrac{1}{p}<\dfrac{1}{b}+\dfrac{1}{r}<\min\left\{ \dfrac{2}{p} + \dfrac{1}{q},\, 1\right\}$. We suppose that $\begin{bmatrix}
\bar{u}\\
\bar{\theta}
\end{bmatrix} \in \mathcal{M}^\sigma_{p,\infty,\lambda}\times \mathcal{M}_{p,\infty,\lambda}$, where $\lambda=n-p$. If $\norm{\begin{bmatrix}
u_0\\
\theta_0
\end{bmatrix}}_{p,\infty,\lambda}$, $\norm{\begin{bmatrix}
\bar{u}\\
\bar{\theta}
\end{bmatrix}}_{H_{p,\infty}}$ and $\sup\limits_{t>0}t^{1-\frac{p}{2b}}\norm{g(\cdot,t)}_{b,\infty,\lambda}$ are small enough, then there exists a unique solution $\begin{bmatrix}
u\\
\theta
\end{bmatrix}$ of the Cauchy problem \eqref{AbstractEquation} in a small ball of $H_{q,r,\infty}$.
\end{theorem}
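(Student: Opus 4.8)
The plan is to apply the abstract fixed-point result, Lemma~\ref{FixedPoint}, in the Banach space $X=H_{q,r,\infty}$, mimicking the proof of Theorem~\ref{wellposed} but now working simultaneously with the two norms that define $H_{q,r,\infty}$. First I would record that $H_{q,r,\infty}$, equipped with $\norm{\cdot}_{H_{q,r,\infty}}$, is a Banach space and that, by Theorem~\ref{WellposedHqr}'s hypotheses together with Theorem~\ref{wellposed}, the initial data term $e^{-tL}\bigl[\begin{smallmatrix}u_0\\\theta_0\end{smallmatrix}\bigr]$ lies in $H_{q,r,\infty}$ with norm controlled by $C\norm{\bigl[\begin{smallmatrix}u_0\\\theta_0\end{smallmatrix}\bigr]}_{p,\infty,\lambda}$; this uses Lemma~\ref{Disp} with the exponent drop $\tfrac12(\tau_{p,\lambda}-\tau_{q,\lambda})=\tfrac{\alpha}{2}$ and $\tfrac12(\tau_{p,\lambda}-\tau_{r,\lambda})=\tfrac{\beta}{2}$ to produce exactly the time weights $t^{\alpha/2}$, $t^{\beta/2}$.

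Next I would verify the three structural ingredients of Lemma~\ref{FixedPoint}. The bilinear bound $\norm{B(\cdot,\cdot)}_{H_{q,r,\infty}}\le \widetilde K\norm{\cdot}_{H_{q,r,\infty}}\norm{\cdot}_{H_{q,r,\infty}}$ is exactly Theorem~\ref{BBBestimate}(i). For the linear operator $\mathbb T\bigl(\bigl[\begin{smallmatrix}u\\\theta\end{smallmatrix}\bigr]\bigr)=\mathcal T\bigl(\bigl[\begin{smallmatrix}u\\\theta\end{smallmatrix}\bigr]\bigr)+T_g(\theta)$, I would estimate its operator norm on $H_{q,r,\infty}$. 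The term $\mathcal T$ is linear in the unknown and bilinear against the fixed steady state $\bigl[\begin{smallmatrix}\bar u\\\bar\theta\end{smallmatrix}\bigr]\in H_{p,\infty}$, so Theorem~\ref{BBBestimate}(ii) gives $\norm{\mathcal T(\cdot)}_{H_{q,r,\infty}}\le 2\widehat K\,\norm{\bigl[\begin{smallmatrix}\bar u\\\bar\theta\end{smallmatrix}\bigr]}_{H_{p,\infty}}\norm{\cdot}_{H_{q,r,\infty}}$ (the factor $2$ because $\mathcal T$ symmetrizes $u\otimes\bar u$ and $\bar u\otimes u$). For $T_g$ I would run the same computation as in \eqref{TEst}, but now also estimating the weighted norms: apply Lemma~\ref{Disp} with target index $q$ (resp.\ $r$), source index $d$ with $\tfrac1d=\tfrac1b+\tfrac1r$ from H\"older~\eqref{HolderWM}, so that
\begin{equation*}
t^{\alpha/2}\norm{T_g(\theta)(t)}_{q,\infty,\lambda}\le \kappa\int_0^t t^{\alpha/2}(t-s)^{-\frac12(\tau_{d,\lambda}-\tau_{q,\lambda})}\norm{g(s)}_{b,\infty,\lambda}\norm{\theta(s)}_{r,\infty,\lambda}\,ds,
\end{equation*}
and then insert the bound $\norm{\theta(s)}_{r,\infty,\lambda}\le s^{-\beta/2}\norm{\bigl[\begin{smallmatrix}u\\\theta\end{smallmatrix}\bigr]}_{H_{q,r,\infty}}$ together with $\norm{g(s)}_{b,\infty,\lambda}\le s^{-1+\frac{p}{2b}}\sup_{t>0}t^{1-\frac{p}{2b}}\norm{g(t)}_{b,\infty,\lambda}$; the resulting time integral is a Beta-type integral $\int_0^1(1-z)^{-a}z^{-c}\,dz$ whose convergence is precisely guaranteed by the scaling constraints $\tfrac1p<\tfrac1b+\tfrac1r<\min\{\tfrac2p+\tfrac1q,1\}$ in the hypothesis. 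Thus $\norm{\mathbb T}\le \tau$ with $\tau$ a fixed multiple of $2\widehat K\norm{\bigl[\begin{smallmatrix}\bar u\\\bar\theta\end{smallmatrix}\bigr]}_{H_{p,\infty}}+\kappa M'\sup_{t>0}t^{1-\frac{p}{2b}}\norm{g(t)}_{b,\infty,\lambda}$, which the smallness assumptions force below $1$.

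With $y=e^{-tL}\bigl[\begin{smallmatrix}u_0\\\theta_0\end{smallmatrix}\bigr]$ small (from smallness of $\norm{\bigl[\begin{smallmatrix}u_0\\\theta_0\end{smallmatrix}\bigr]}_{p,\infty,\lambda}$), $B$ bilinear-bounded, and $\norm{\mathbb T}\le\tau<1$, Lemma~\ref{FixedPoint} yields a unique solution of $x=y+B(x,x)+\mathbb T(x)$ in a small ball of $H_{q,r,\infty}$, which is the asserted mild solution; its uniqueness in that ball and the continuous dependence on $\bigl[\begin{smallmatrix}u_0\\\theta_0\end{smallmatrix}\bigr]$ come for free from the lemma. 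I expect the main obstacle to be bookkeeping the admissibility of all the exponent triples: one must check at each invocation of Lemma~\ref{Disp} and Lemma~\ref{HolderWM} that the Morrey indices, the Lorentz secondary index $\infty$, and the homogeneity relation $\tau_{\cdot,\lambda}=\tfrac{n-\lambda}{\cdot}$ with $\lambda=n-p$ are compatible, and that every time-singular exponent appearing in a Duhamel integral is $<1$ at $s=0$ and $<1$ at $s=t$ — these are exactly the inequalities packaged into $r>q'$, $2<p<q<r<\infty$, and the double inequality on $\tfrac1b+\tfrac1r$, so the real work is confirming that the stated hypotheses are precisely the ones that make every integral converge rather than discovering new constraints.
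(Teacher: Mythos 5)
Your proposal follows essentially the same route as the paper's proof: the weighted Duhamel formulation, Theorem~\ref{BBBestimate}(i) for the bilinear term, Theorem~\ref{BBBestimate}(ii) for the coupling with the steady state, a weighted Beta-type integral estimate for $T_g$ whose convergence is exactly what the hypotheses on $\tfrac1b+\tfrac1r$ encode, the initial-data bound via Lemma~\ref{Disp}, and finally Lemma~\ref{FixedPoint} under the smallness assumptions. No substantive differences to report.
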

\begin{proof}
We have that \eqref{AbstractEquation} is equivalent to
\begin{eqnarray}
\begin{bmatrix}
t^{\frac{\alpha}{2}}u\\ 
t^{\frac{\beta}{2}}\theta 
\end{bmatrix}(t) &=& e^{-tL}\begin{bmatrix}
t^{\frac{\alpha}{2}}u_0\\
t^{\frac{\beta}{2}}\theta
\end{bmatrix} - \int_0^t \nabla_x e^{-(t-s)L} \begin{bmatrix}
t^{\frac{\alpha}{2}}\mathbb{P}(u\otimes u)\\
t^{\frac{\beta}{2}}u\theta
\end{bmatrix}(s) ds \cr
&& - \int_0^t \nabla_x e^{-(t-s)L}\begin{bmatrix}
t^{\frac{\alpha}{2}}\mathbb{P}(u\otimes \bar{u} + \bar{u}\otimes u)\\
t^{\frac{\beta}{2}}(\theta\bar{u}+ u\bar{\theta})
\end{bmatrix}(s) ds + \int_0^t e^{-(t-s)L}\begin{bmatrix}\kappa e^{t^{\frac{\alpha}{2}}}\mathbb{P}(\theta g)\\.
0
\end{bmatrix}(s) ds.
\end{eqnarray}
We have that
\begin{eqnarray}\label{IIne3}
&&\norm{\int_0^t e^{-(t-s)L}\begin{bmatrix}\kappa e^{t^{\frac{\alpha}{2}}}\mathbb{P}(\theta g)\\.
0
\end{bmatrix}(s) ds}_{q,\infty,\lambda;r,\infty,\lambda} \cr
&\leq& t^{\frac{\alpha}{2}}\int_0^t (t-s)^{1-\frac{p}{2b}+\frac{\beta}{2}-\frac{\alpha}{2}-1}s^{-1+\frac{p}{2b}-\frac{\beta}{2}}s^{\frac{\beta}{2}}\norm{\theta(s)}_{r,\infty,\lambda}ds \sup_{t>0}t^{1-\frac{p}{2b}}\norm{g(t)}_{b,\infty,\lambda}\cr
&\leq& \int_0^1 (t-s)^{-\frac{p}{2b}+\frac{\beta-\alpha}{2}}s^{-1+\frac{p}{2b}-\frac{\beta}{2}}(ts)^{\frac{\beta}{2}}\norm{\theta(ts)}_{r,\infty,\lambda}ds \sup_{t>0}t^{1-\frac{p}{2b}}\norm{g(t)}_{b,\infty,\lambda}\cr
&\leq& L \kappa\sup_{t>0}t^{1-\frac{p}{2b}}\norm{g(t)}_{b,\infty,\lambda} \norm{\begin{bmatrix}
u\\
\theta
\end{bmatrix}}_{H_{q,r,\infty}},
\end{eqnarray}
where
$$L = \int_0^1 (t-s)^{-\frac{p}{2b}+\frac{\beta-\alpha}{2}}s^{-1+\frac{p}{2b}-\frac{\beta}{2}}(ts)^{\frac{\beta}{2}} ds <\infty$$
since the fact that $2<p<q<r <\infty$, $r>q':=\dfrac{q}{q-1}$ and $p\leq \min \left\{ n,\, 2b\right\}$, $\dfrac{1}{p}<\dfrac{1}{b}+\dfrac{1}{r}<\min\left\{ \dfrac{2}{p} + \dfrac{1}{q},\, 1\right\}$.
Moreover, applying assertion $(ii)$ in Theorem \ref{BBBestimate} we have that
\begin{equation}\label{IIne2}
\norm{\int_0^t \nabla_x e^{-(t-s)L}\begin{bmatrix}
t^{\frac{\alpha}{2}}\mathbb{P}(u\otimes \bar{u} + \bar{u}\otimes u)\\
t^{\frac{\beta}{2}}(\theta\bar{u}+ u\bar{\theta})
\end{bmatrix}(s) ds}_{q,\infty,\lambda;r,\infty,\lambda} \leq 2\widehat{K}\norm{\begin{bmatrix}
\bar{u}\\
\bar{\theta}
\end{bmatrix}}_{H_{p,\infty}} \norm{\begin{bmatrix}
u\\
\theta
\end{bmatrix}}_{H_{q,r,\infty}}.
\end{equation}
Combining \eqref{IIne3} and \eqref{IIne2} we obtain that
\begin{equation}\label{IIne1}
\norm{\mathbb{T}\left(\begin{bmatrix}
u\\
\theta
\end{bmatrix}\right)}_{H_{q,r,\infty}} \leq \left( 2\widehat{K}\norm{\begin{bmatrix}
\bar{u}\\
\bar{\theta}
\end{bmatrix}}_{H_{p,\infty}}  + L\kappa \sup_{t>0}t^{1-\frac{p}{2b}}\norm{g(t)}_{b,\infty,\lambda} \right)\norm{\begin{bmatrix}
u\\
\theta
\end{bmatrix}}_{H_{q,r,\infty}}.
\end{equation}
By using estimate \eqref{disp}, the initial data can be estimated as follows
\begin{equation}
\norm{e^{-tL}\begin{bmatrix}
\hat{u}_0\\
\hat{\theta}_0
\end{bmatrix}}_{q,\infty,\lambda;r,\infty,\lambda} \leq C\norm{\begin{bmatrix}
u_0\\
\theta_0
\end{bmatrix}}_{p,\infty,\lambda}. 
\end{equation}
Hence
\begin{equation}\label{IIne0}
\norm{e^{-tL}\begin{bmatrix}
\hat{u}_0\\
\hat{\theta}_0
\end{bmatrix}}_{H_{q,r,\infty}} \leq C\norm{\begin{bmatrix}
u_0\\
\theta_0
\end{bmatrix}}_{p,\infty,\lambda}.
\end{equation}
Using \eqref{IIne1}, \eqref{IIne0}, Theorem \ref{BBBestimate} and Lemma \ref{FixedPoint} we have that: 
there are constants $C_{q,r}, K_2, \delta_{q,r}, \eta_{q,r}$ such that if 
$$\norm{\begin{bmatrix}
u_0\\
\theta_0
\end{bmatrix}}_{p,\infty,\lambda} < \delta_{q,r} \hbox{  and  } 2\widehat{K}\norm{\begin{bmatrix}
\bar{u}\\
\bar{\theta}
\end{bmatrix}}_{p,\infty,\lambda} + L\kappa\sup_{t>0}t^{1-\frac{p}{2b}}\norm{g(t)}_{b,\infty,\lambda}< \eta_{q,r},$$
then \eqref{AbstractEquation} has a global mild solution $\begin{bmatrix}
u\\
\theta
\end{bmatrix} \in H_{q,r,\infty}$ satisfying that
$$\norm{\begin{bmatrix}
u\\
\theta
\end{bmatrix}}_{H_{q,r,\infty}} \leq \frac{2C_{q,r}\delta_{q,r}}{1-K_2\eta_{q,r}}.$$
\end{proof}

\subsection{Asymptotic stablity}
In this subsection, by using bilinear estimates obtained in Theorem \ref{Bestimate} and Theorem \ref{BBBestimate}, we prove the large time behaviour of global mild solutions of system \eqref{DisturBouEq}. This asymptotic stability shows the stability of the stationary Boussinesq system \eqref{StBouEq}.
\begin{theorem}\label{Stability}(Asymptotic stability)
Let $n\geqslant 3$, $2<p<q<r <\infty$, $r>q':=\dfrac{q}{q-1}$ and $p\leq \min \left\{ n,\, 2b\right\}$, $\dfrac{1}{p}<\dfrac{1}{b}+\dfrac{1}{r}<\min\left\{ \dfrac{2}{p} + \dfrac{1}{q},\, 1\right\}$.
Assume that $\begin{bmatrix}
u\\
\theta 
\end{bmatrix}$ is a mild solution of \eqref{AbstractEquation} given by Theorem \ref{WellposedHqr} corresponding to the steady solution $\begin{bmatrix}
\bar{u}\\
\bar{\theta} 
\end{bmatrix} \in \mathcal{M}^\sigma_{p,\infty,\lambda}\times \mathcal{M}_{p,\infty,\lambda}, \, \lambda=n-p$ and initial data $\begin{bmatrix}
u_0\\
\theta_0 
\end{bmatrix} \in \mathcal{M}^\sigma_{p,\infty,\lambda}\times \mathcal{M}_{p,\infty,\lambda}$.
If $\lim\limits_{t\to\infty} \norm{e^{-tL}\begin{bmatrix}
t^{\frac{\alpha}{2}}u_0\\
t^{\frac{\beta}{2}}\theta_0
\end{bmatrix}}_{q,\infty,\lambda;r,\infty,\lambda} =0$, then
\begin{equation}
\lim\limits_{t\to\infty} \norm{\begin{bmatrix}
t^{\frac{\alpha}{2}}u\\
t^{\frac{\beta}{2}}\theta
\end{bmatrix}(\cdot,t)}_{q,\infty,\lambda;r,\infty,\lambda} =0.
\end{equation} 
\end{theorem}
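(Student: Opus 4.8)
The plan is to exploit the integral equation for the mild solution together with the two bilinear estimates already proved, in the spirit of the classical Yamazaki decay argument. First I would write the mild solution in the time-weighted form used in the proof of Theorem \ref{WellposedHqr}, namely
\[
\begin{bmatrix}
t^{\frac{\alpha}{2}}u\\
t^{\frac{\beta}{2}}\theta
\end{bmatrix}(t) = e^{-tL}\begin{bmatrix}
t^{\frac{\alpha}{2}}u_0\\
t^{\frac{\beta}{2}}\theta_0
\end{bmatrix} + B\!\left( \begin{bmatrix}
u\\
\theta
\end{bmatrix}, \begin{bmatrix}
u\\
\theta
\end{bmatrix} \right)(t) + \mathbb{T}\!\left( \begin{bmatrix}
u\\
\theta
\end{bmatrix} \right)(t),
\]
and set $\Phi(t):=\norm{\begin{bmatrix} t^{\alpha/2}u\\ t^{\beta/2}\theta\end{bmatrix}(\cdot,t)}_{q,\infty,\lambda;r,\infty,\lambda}$ and $L_\infty:=\limsup_{t\to\infty}\Phi(t)$, which is finite since the solution lies in $H_{q,r,\infty}$. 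The hypothesis kills the linear term, so it suffices to bound $\limsup_{t\to\infty}$ of the norms of the bilinear and $\mathbb{T}$-terms by a constant strictly less than $1$ times $L_\infty$, forcing $L_\infty=0$.

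The key point is that in each of the estimates \eqref{BBestimate}, \eqref{BBBBestimate}, and \eqref{IIne3}, the time integral appearing after the change of variables $s=t\sigma$ is over $\sigma\in(0,1)$ with an integrable kernel independent of $t$, and the remaining factors are evaluated at $t\sigma$. So I would split the $\sigma$-integral as $\int_0^\delta + \int_\delta^{1-\delta} + \int_{1-\delta}^1$ (or more simply $\int_0^{1/2}+\int_{1/2}^1$, then let $t\to\infty$ inside). On $\sigma$ bounded away from $0$, the argument $t\sigma\to\infty$, so $\norm{\cdot}(t\sigma)$ can be replaced, in the limsup, by $L_\infty$ (for the genuinely bilinear term, by $L_\infty^2$ via $\Phi(t\sigma)^2$); on $\sigma$ near $0$ one uses instead the global bound $\norm{\begin{bmatrix}u\\\theta\end{bmatrix}}_{H_{q,r,\infty}}$ together with the smallness of $\int_0^\delta(\cdot)\,d\sigma$. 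This yields
\[
L_\infty \le \widetilde{K}\, L_\infty\, \norm{\begin{bmatrix}u\\\theta\end{bmatrix}}_{H_{q,r,\infty}} + \Big( 2\widehat{K}\norm{\begin{bmatrix}\bar u\\\bar\theta\end{bmatrix}}_{H_{p,\infty}} + L\kappa\sup_{t>0}t^{1-\frac{p}{2b}}\norm{g(t)}_{b,\infty,\lambda} \Big) L_\infty ,
\]
up to harmless constants from the split; more precisely for the quadratic term I would use the submultiplicativity $\Phi(t\sigma)\le\Phi(t\sigma)$ and bound one factor by $L_\infty+\varepsilon$ and the other by the global norm, so that the coefficient of $L_\infty$ is $\lesssim \widehat K\,\norm{\begin{bmatrix}u\\\theta\end{bmatrix}}_{H_{q,r,\infty}}$. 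Since Theorem \ref{WellposedHqr} places the solution and the data in a small ball, this total coefficient is $<1$, hence $L_\infty=0$, which is the claim.

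The main obstacle is handling the quadratic term $B$ carefully: a naive $\limsup$ of a product is not the product of $\limsup$'s, so I must fix $\varepsilon>0$, choose $T$ so that $\Phi(\tau)<L_\infty+\varepsilon$ for $\tau\ge T$, then for $t$ large split the $s$-integral at $s=T$ (equivalently $\sigma=T/t\to0$); on $s\in(T,t)$ both copies of the solution are controlled by $L_\infty+\varepsilon$ against an $L^1_\sigma$ kernel, while on $s\in(0,T)$ the factor $\int_0^{T/t}(\cdot)\,d\sigma\to0$ absorbs the (bounded) global norms. A secondary technical point is checking that the kernel exponents in \eqref{IIne3} are genuinely integrable near both endpoints under the stated relations among $p,q,r,b$ — but this is exactly the computation already certified in the proof of Theorem \ref{WellposedHqr} (the finiteness of $L$), so it can be cited rather than redone. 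After these two points the contraction-type inequality and the conclusion $L_\infty=0$ are immediate.
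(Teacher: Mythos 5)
Your overall strategy is the same as the paper's: write the weighted Duhamel formula, let the hypothesis remove the free part, and derive a contraction-type inequality $L_\infty \le c\,L_\infty$ with $c<1$ coming from the smallness in Theorem \ref{WellposedHqr}, whence $L_\infty=0$. Where the paper simply takes $\limsup$ and invokes Theorem \ref{BBBestimate} and \eqref{IIne3}, your $\varepsilon$--$T$ splitting is the standard way to make that passage precise, and it does work for the genuinely quadratic term (in the proof of \eqref{BBestimate} the change of variables $s=t\sigma$ produces the $t$-independent integrable kernel $(1-\sigma)^{-\frac12-\frac{p}{2q}}\sigma^{-\alpha}$) and for the $T_g$-term, whose kernel in \eqref{IIne3} is integrable at both endpoints under the stated relations between $p,q,r,b$.

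The gap is in your key claim that \emph{each} of \eqref{BBestimate}, \eqref{BBBBestimate}, \eqref{IIne3} arises from an integrable kernel in $\sigma=s/t$: this is false precisely for \eqref{BBBBestimate}, i.e.\ for the operator $\mathcal{T}$ coupling the solution with the steady state $(\bar u,\bar\theta)\in\mathcal{M}^\sigma_{p,\infty,\lambda}\times\mathcal{M}_{p,\infty,\lambda}$. There H\"older and Lemma \ref{Disp} only give the kernel $(t-s)^{-1}$ (the gap of homogeneities is $\tau_{d,\lambda}-\tau_{q,\lambda}=1$ with $\frac1d=\frac1p+\frac1q$, $\lambda=n-p$), so $t^{\alpha/2}\int_0^t(t-s)^{-1}s^{-\alpha/2}\,ds$ diverges logarithmically; this is the critical term, and the paper controls it not by a pointwise-in-time kernel bound but by duality and the Yamazaki-type estimate (Lemma \ref{LinearEst}, built on Lemma \ref{YamazakiType}), which returns only a bound in terms of the sup in time. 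Hence your splitting argument, as written, does not apply to this term. It can be repaired: for $t$ large apply Lemma \ref{LinearEst} (in its weighted use, as in the proof of \eqref{1}--\eqref{2}) to the integrand truncated to $s\in(T,t)$, whose weighted sup is at most $L_\infty+\varepsilon$ once $T$ is large, and verify directly that the head piece $\int_0^T$ tends to $0$ in the weighted norm as $t\to\infty$, where the $(t-s)^{-1}$ factor is harmless, e.g.\ $t^{\alpha/2}(t-T)^{-1}\int_0^T s^{-\alpha/2}ds\to0$; the second component $\theta\bar u+u\bar\theta$ is handled analogously. With this modification your argument closes and reproduces the paper's inequality $L_\infty\le\bigl(\widetilde K\,\norm{(u,\theta)}_{H_{q,r,\infty}}+2\widehat K\norm{(\bar u,\bar\theta)}_{p,\infty,\lambda}+L\kappa\sup_{t>0}t^{1-\frac{p}{2b}}\norm{g(t)}_{b,\infty,\lambda}\bigr)L_\infty$ and the conclusion.
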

\begin{proof}
We recall that
\begin{eqnarray}
\begin{bmatrix}
t^{\frac{\alpha}{2}}u\\ 
t^{\frac{\beta}{2}}\theta 
\end{bmatrix}(\cdot,t) &=& e^{-tL}\begin{bmatrix}
t^{\frac{\alpha}{2}}u_0\\
t^{\frac{\beta}{2}}\theta
\end{bmatrix} - \int_0^t \nabla_x e^{-(t-s)L} \begin{bmatrix}
t^{\frac{\alpha}{2}}\mathbb{P}(u\otimes u)\\
t^{\frac{\beta}{2}}u\theta
\end{bmatrix}(s) ds \cr
&& - \int_0^t \nabla_x e^{-(t-s)L}\begin{bmatrix}
t^{\frac{\alpha}{2}}\mathbb{P}(u\otimes \bar{u} + \bar{u}\otimes u)\\
t^{\frac{\beta}{2}}(\theta\bar{u}+ u\bar{\theta})
\end{bmatrix}(s) ds + \int_0^t e^{-(t-s)L}\begin{bmatrix}\kappa e^{t^{\frac{\alpha}{2}}}\mathbb{P}(\theta g)\\.
0
\end{bmatrix}(s) ds.
\end{eqnarray}
Hence
\begin{eqnarray}
\norm{\begin{bmatrix}
t^{\frac{\alpha}{2}}u\\ 
t^{\frac{\beta}{2}}\theta 
\end{bmatrix}(\cdot,t)}_{q,\infty,\lambda;r,\infty,\lambda} &\leq& \norm{e^{-tL}\begin{bmatrix}
t^{\frac{\alpha}{2}}u_0\\
t^{\frac{\beta}{2}}\theta
\end{bmatrix}}_{q,\infty,\lambda;r,\infty,\lambda} + \norm{\int_0^t \nabla_x e^{-(t-s)L} \begin{bmatrix}
t^{\frac{\alpha}{2}}\mathbb{P}(u\otimes u)\\
t^{\frac{\beta}{2}}u\theta
\end{bmatrix}(s) ds}_{q,\infty,\lambda;r,\infty,\lambda} \cr
&& + \norm{\int_0^t \nabla_x e^{-(t-s)L}\begin{bmatrix}
t^{\frac{\alpha}{2}}\mathbb{P}(u\otimes \bar{u} + \bar{u}\otimes u)\\
t^{\frac{\beta}{2}}(\theta\bar{u}+ u\bar{\theta})
\end{bmatrix}(s) ds}_{q,\infty,\lambda;r,\infty,\lambda} \cr
&&+ \norm{\int_0^t e^{-(t-s)L}\begin{bmatrix}\kappa e^{t^{\frac{\alpha}{2}}}\mathbb{P}(\theta g)\\.
0
\end{bmatrix}(s) ds}_{q,\infty,\lambda;r,\infty,\lambda}.
\end{eqnarray}
Using Theorem \ref{BBBestimate} and estimate \eqref{IIne3} we obtain the
\begin{eqnarray}\label{limsup}
\limsup_{t\to\infty}\norm{\begin{bmatrix}
t^{\frac{\alpha}{2}}u\\ 
t^{\frac{\beta}{2}}\theta 
\end{bmatrix}(\cdot,t)}_{q,\infty,\lambda;r,\infty,\lambda} &\leq& \limsup_{t\to\infty}\norm{e^{-tL}\begin{bmatrix}
t^{\frac{\alpha}{2}}u_0\\
t^{\frac{\beta}{2}}\theta_0
\end{bmatrix}}_{q,\infty,\lambda;r,\infty,\lambda} + \limsup_{t\to\infty}\widetilde{K}\norm{ \begin{bmatrix}
t^{\frac{\alpha}{2}}u\\
t^{\frac{\beta}{2}}\theta
\end{bmatrix}}^2_{q,\infty,\lambda;r,\infty,\lambda} \cr
&& + 2\limsup_{t\to\infty}\widehat{K}\norm{\begin{bmatrix}\bar{u}\\
\bar{\theta} \end{bmatrix}}_{p,\infty,\lambda}\norm{\begin{bmatrix}t^{\frac{\alpha}{2}}u\\
t^{\frac{\beta}{2}}\theta \end{bmatrix}}_{q,\infty,\lambda;r,\infty,\lambda} \cr
&&+ L\kappa \limsup_{t\to\infty} t^{1-\frac{p}{2b}}\norm{g(t)}_{b,\infty,\lambda}\norm{\begin{bmatrix}t^{\frac{\alpha}{2}}u\\
t^{\frac{\beta}{2}}\theta \end{bmatrix}}_{q,\infty,\lambda;r,\infty,\lambda}\cr
&\leq& \left( \frac{2\widetilde{K}C_{qr}\delta_{qr}}{1-K_2\eta_{qr}} + 2\widehat{K}\norm{\begin{bmatrix}\bar{u}\\
\bar{\theta} \end{bmatrix}}_{p,\infty,\lambda} + L\kappa \limsup_{t\to\infty} t^{1-\frac{p}{2b}}\norm{g(t)}_{b,\infty,\lambda} \right)\cr
&&\times \limsup_{t\to\infty}\norm{\begin{bmatrix}
t^{\frac{\alpha}{2}}u\\
t^{\frac{\beta}{2}}\theta
\end{bmatrix}}_{q,\infty,\lambda;r,\infty,\lambda}\cr
&\leq& \left( \frac{2\widetilde{K}C_{qr}\delta_{qr}}{1-K_2\eta_{qr}} + \eta_{q,r} \right) \limsup_{t\to\infty}\norm{\begin{bmatrix}
t^{\frac{\alpha}{2}}u\\
t^{\frac{\beta}{2}}\theta
\end{bmatrix}}_{q,\infty,\lambda;r,\infty,\lambda},
\end{eqnarray}
where the constants $K_2,C_{qr},\delta_{qr},\eta_{qr}$ given in the proof of Theorem \ref{WellposedHqr}.
We can chose these constants such that
$$\frac{2\widetilde{K}C_{qr}\delta_{qr}}{1-K_2\eta_{qr}} + \eta_{q,r}<1.$$
As a direct consequence of this and \eqref{limsup} we have that
$$\limsup_{t\to\infty}\norm{\begin{bmatrix}
t^{\frac{\alpha}{2}}u\\ 
t^{\frac{\beta}{2}}\theta 
\end{bmatrix}(\cdot,t)}_{q,\infty,\lambda;r,\infty,\lambda} =0.$$
Our proof is completed.
\end{proof}

\end{document}